\numberwithin{equation}{section}
\newtheorem{theorem}{Theorem}[section]
\newtheorem{proposition}[theorem]{Proposition}
\newtheorem{corollary}[theorem]{Corollary}
\newtheorem{lemma}[theorem]{Lemma}
\newtheorem{problem}[theorem]{Problem}
\newtheorem{example}[theorem]{Example}
\newtheorem{remark}[theorem]{Remark}
\newtheorem{definition}[theorem]{Definition}
\theoremstyle{definition}
\newcommand{\Hilb}{{\mathrm{Hilb}}}
\newcommand{\gl}{\mathrm{GL}}
\newcommand{\Frob}{{\mathrm{Frob}}}
\newcommand{\symm}{{\mathfrak{S}}}
\newcommand{\II}{{\mathbf{I}}}
\newcommand{\gr}{{\mathrm {gr}}}
\newcommand{\lp}[1]{\mathcal{LP}(#1)}
\newcommand{\wid}[1]{\mathrm{wid}(#1)}
\newcommand{\hori}[1]{\mathrm{HS}(#1)}
\newcommand{\horipositive}[1]{\mathrm{PHS}(#1)}
\newcommand{\horiwid}[1]{\mathrm{WHS}(#1)}
\newcommand{\leftshadow}[1]{\mathcal{LS}_{#1}}
\newcommand{\rightshadow}[1]{\mathcal{RS}_{#1}}
\newcommand{\grFrob}{{\mathrm{grFrob}}}
\newcommand{\ZZZ}{{\mathcal{Z}}}
\newcommand{\MMM}{{\mathcal{M}}}
\newcommand{\CC}{{\mathbb{C}}}
\newcommand{\ZZ}{{\mathbb{Z}}}
\newcommand{\Mat}{{\mathrm{Mat}}}
\newcommand{\SYT}{\mathrm{SYT}}
\newcommand{\SSYT}{\mathrm{SSYT}}
\newcommand{\xxx}{{\mathbf{x}}}
\title{Positive combinatorial formulae for involution matrix loci and orbit harmonics}
\author{Hai Zhu}
\address{Department of Mathematics, UC San Diego, La Jolla, CA, 92093, USA}
\email{haz138@ucsd.edu}
\date{\today}
\begin{document}

\begin{abstract}
    Let $\MMM_{n,a}$ be the set consisting of involutions in the symmetric group $\symm_n$ with exactly $a$ fixed points, and apply the orbit harmonics method to obtain a graded $\symm_n$-module $R(\MMM_{n,a})$. Liu, Ma, Rhoades, and Zhu figured out a signed combinatorial formula for the graded Frobenius image $\grFrob(R(\MMM_{n,a});q)$ of $R(\MMM_{n,a})$. Our goal is to cancel these signs. Finally, we find two positive combinatorial formulae for $\grFrob(R(\MMM_{n,a});q)$. As an application, we deduce a series of $\symm_n$-equivariant isomorphisms between graded components $R(\MMM_{n,a})_d$ and $R(\MMM_{n,a^\prime})_d$ for some integers $a\neq a^\prime$ and $d$. Our positive formulae also yield potential attempts to find a linear basis for $R(\MMM_{n,a})$ and a statistic $\mathrm{stat}:\MMM_{n,a}\rightarrow\mathbb{Z}_{\ge0}$ to interpret the Hilbert series $\Hilb(R(\MMM_{n,a});q)$ of $R(\MMM_{n,a})$.
\end{abstract}

\maketitle

\section{Introduction}\label{sec:intro}

Orbit harmonics is a vital method in combinatorial representation theory. Input a finite locus $\mathcal{Z}\subseteq\CC^N$ and apply the orbit harmonics method to $\ZZZ$. Then the output is $R(\ZZZ)$, a quotient ring of $\CC[\xxx_N]$ by a graded ideal. If $\ZZZ$ further carries an action of some subgroup $G\le\gl(\CC^N)$, then we have an isomorphism of $G$-modules $R(\ZZZ)\cong\CC[\ZZZ]$ where $\CC[\ZZZ]$ is the space of all functions $f:\ZZZ\rightarrow\CC$. That is, $R(\ZZZ)$ is a graded refinement of the above-mentioned action $G\curvearrowright\ZZZ$. Furthermore, $R(\ZZZ)$ provides algebraic tools to understand combinatorics on $\ZZZ$.

Orbit harmonics interacts with many fields in mathematics, such as cohomology theory \cite{garsia1992certain}, Macdonald theory \cite{griffin2021ordered,haglund2018ordered}, cyclic sieving \cite{oh2022cyclic}, Donaldson–Thomas theory \cite{reineke2023zonotopal}, and Ehrhart theory \cite{reiner2024harmonics}.

Rhoades \cite{rhoades2024increasing} initiated the implementation of the orbit harmonics method into matrix loci $\ZZZ\subseteq\Mat_{n\times n}(\CC)$, studying the permutation matrix locus $\ZZZ=\symm_n$ carrying an action of the subgroup $\symm_n\times\symm_n\le\gl(\Mat_{n\times n}(\CC))$. Thanks to this work, Chen's log-concavity conjecture \cite{chen} can be studied from an algebraic perspective. Then Liu \cite{liu2024viennot} extended Rhoades \cite{rhoades2024increasing} to colored permutations.

Permutation matrix loci were revisited by Liu et al. \cite{liu2025involution}. They considered $\ZZZ = \MMM_{n,a}$, the matrix locus consisting of involutions in $\symm_n$ with $a$ fixed points, carrying the conjugate action of $\symm_n$. They found an explicit combinatorial expression of the graded Frobenius image $\grFrob(R(\MMM_{n,a});q)$ of $R(\MMM_{n,a})$ (Theorem~\ref{thm:conjugacy-module-character}). However, this is a signed formula. In algebraic combinatorics, people usually prefer positive combinatorial expressions without minus signs.

We find two positive combinatorial formulae for the graded Frobenius image $\grFrob(R(\MMM_{n,a});q)$. Our main theorem is the second one which is more compatible with the Schur expansion of $\Frob(\CC[\MMM_{n,a}]) = \Frob(R(\MMM_{n,a})) = h_{(n-a)/2}[h_2]\cdot h_a$ using Pieri's rule.

\begin{theorem}\label{thm:main}
    For integers $n,a$ such that $n>0$, $0\le a\le n$, $a\equiv n\mod{2}$, we have that
    \[\grFrob(R(\MMM_{n,a});q) = \sum_{\lambda/\mu} q^{\frac{n+a-\wid{\lambda/\mu}}{2}}\cdot s_\lambda\]
    where the summation is over all the horizontal stripes $\lambda/\mu$ such that:
    \begin{itemize}
        \item $\lambda\vdash n$
        \item $\mu\vdash n-a$ is an even partition
    \end{itemize}
\end{theorem}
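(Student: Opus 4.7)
The plan is to establish Theorem~\ref{thm:main} in three phases, bridging the ungraded character identity, the first positive formula (the preceding main result of the paper), and the width statistic on horizontal strips.

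First, I would verify the ungraded specialization as a sanity check. Setting $q=1$, the right-hand side becomes $\sum_{\lambda/\mu} s_\lambda$ summed over horizontal strips with $\mu\vdash n-a$ even and $\lambda\vdash n$. By Pieri's rule this rewrites as $\bigl(\sum_{\mu\vdash n-a,\ \mu\text{ even}} s_\mu\bigr)\cdot h_a$, and the plethystic identity $h_{(n-a)/2}[h_2]=\sum_{\mu\vdash n-a,\ \mu\text{ even}} s_\mu$ identifies this with $h_{(n-a)/2}[h_2]\cdot h_a=\Frob(\CC[\MMM_{n,a}])$, agreeing with the $q\to 1$ limit of Theorem~\ref{thm:conjugacy-module-character}. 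Moreover this check forces the parity statement: for any horizontal strip in the index set, $n+a-\wid{\lambda/\mu}$ must be even, which follows from the evenness of $|\mu|=n-a$ together with the horizontal-strip constraint.

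Second, I would use the first positive formula as the starting point and construct a weight-preserving bijection $\phi$ from its indexing set $\mathcal{X}$ to the set of pairs $(\lambda,\mu)$ described in Theorem~\ref{thm:main}, sending the degree statistic on $\mathcal{X}$ to $\tfrac{n+a-\wid{\lambda/\mu}}{2}$. The natural way to set this up is to view an object of $\mathcal{X}$ as encoding simultaneously a shape $\lambda\vdash n$, a distinguished even sub-partition $\mu\subseteq\lambda$ of size $n-a$ (to be read off from the $(n-a)/2$ transpositions sitting entirely within the non-fixed part of the involution), and a placement of the $a$ fixed-point cells forming the horizontal strip $\lambda/\mu$. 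Then grouping the contributions to $s_\lambda$ by the pair $(\lambda,\mu)$ should collapse the first formula into the Pieri form of Theorem~\ref{thm:main}.

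Third, I would match the statistic. The exponent $\tfrac{n+a-\wid{\lambda/\mu}}{2}$ has a clean reading: $a$ counts the cells of the strip, $\wid{\lambda/\mu}$ subtracts the ``spread'' of the strip, and the offset $(n-a)/2$ corresponds to the transpositions of the involution. I would therefore match $\wid{\lambda/\mu}$ against the relevant local invariant of the first formula by induction on $n$, or equivalently by iterating a one-step recursion that adds a box to either $\mu$ (gaining a factor of $q$) or to $\lambda/\mu$ (shifting the width).

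The main obstacle will be the statistic match in the third phase. The width $\wid{\lambda/\mu}$ is a global feature of the strip, whereas the degree statistic in orbit harmonics and in the first positive formula is most naturally defined locally (e.g., via inversions or fixed-point crossings on the underlying involution). Reconciling these requires carefully exploiting both the column-disjointness of horizontal strips and the evenness of $\mu$; a generating-function approach, extracting the coefficient of $s_\lambda$ on both sides and verifying the resulting polynomial identity in $q$ via a summation over even sub-partitions $\mu\subseteq\lambda$ with $\lambda/\mu$ a horizontal strip, is a viable fallback if the explicit bijection becomes unwieldy.
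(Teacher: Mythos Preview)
Your proposal has a genuine gap: you have misidentified the index set of the first positive formula (Proposition~\ref{prop:bad-ver-formula}), and the bijection you sketch does not address the actual difficulty.

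In Proposition~\ref{prop:bad-ver-formula} the degree-$d$ piece is indexed by $\horipositive{d,\lambda}$, i.e.\ by horizontal stripes $\lambda/\mu$ with $\mu\vdash 2d$ even and with the lattice path $\lp{\lambda/\mu}\mid_{0\le x\le\lambda_1}$ weakly above the $x$-axis. There are no involutions, transpositions, or fixed-point cells in this index set; your description of an object of $\mathcal{X}$ as carrying ``a distinguished even sub-partition $\mu\subseteq\lambda$ of size $n-a$'' is simply wrong --- the sub-partition has size $2d$, not $n-a$. Consequently, the bijection you need is not a regrouping of Pieri terms but a genuine map changing the \emph{size} of the inner partition: from stripes with $|\mu|=2d$ satisfying a positivity condition to stripes with $|\nu|=n-a$ satisfying $\wid{\lambda/\nu}=n-2d+a$. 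Nothing in your phases two or three indicates how to do this, and the inductive/recursive and generating-function fallbacks you mention are too unspecific to constitute a plan.

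The paper's proof supplies exactly this missing idea: the \emph{left shadow} and \emph{right shadow} maps $\leftshadow{d,\lambda}$, $\rightshadow{d,\lambda}$ between $\horipositive{d,\lambda}$ and $\horiwid{d,\lambda}$. Given $\lambda/\mu\in\horipositive{d,\lambda}$, one shines horizontal light rays leftward from the line $x=n-2d+a$ onto $\lp{\lambda/\mu}$; the NE steps hit by the light are flipped to SE steps, producing a new lattice path that encodes a stripe $\lambda/\nu$ with $|\nu|=n-a$. The reflection-pair structure guarantees that $\wid{\lambda/\nu}=n-2d+a$, and the reverse construction (shining light rightward) inverts the map. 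This lattice-path reflection argument is the crux, and your proposal contains no analogue of it.
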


In Theorem~\ref{thm:main}, the statistic $\wid{}$ associated to horizontal stripes is given by Definition~\ref{def:width}. We will prove Theorem~\ref{thm:main} in Section~\ref{sec:good-ver}.

\section{Background}\label{sec:background}

\subsection{Orbit harmonics}\label{subsec:orbit-har}

Given a finite locus $\ZZZ\subseteq\CC^N$ carrying an action of a subgroup $G\subseteq\gl(\CC^N)$, we define its \emph{vanishing ideal} $\II(\ZZZ)\subseteq\CC[\xxx_N]$ by
\[\II(\ZZZ)\coloneqq\{f\in\CC[\xxx_N]\,:\,\text{$f(z) = 0$ for all $z\in\ZZZ$}\}\]
where $\xxx_N\coloneqq\{x_1,\cdots,x_N\}$ is the variable set. Multivariate Lagrange interpolation indicates that
$\CC[\ZZZ]\cong\CC[\xxx_N]/\II(\ZZZ)$ as $G$-modules, where $\CC[\ZZZ]$ is the space of all functions $f:\ZZZ\rightarrow\CC$.

Now we construct a graded version of the $G$-modules above. For an ideal $I\subseteq\CC[\xxx_N]$, the \emph{associated graded ideal} $\gr I$ of $I$ is given by \[\gr I\coloneqq\langle f\,:\,\text{$f$ is the highest homogeneous component of some nonzero polynomial $g\in I$}\rangle.\] Then define the \emph{orbit harmonics ring} $R(\ZZZ)$ of $\ZZZ$ by
\[R(\ZZZ)\coloneqq\CC[\xxx_N]/\gr\II(\ZZZ).\]

The orbit harmonics method possesses an interesting and essential property: a chain of $G$-module isomorphisms
\[\CC[\ZZZ]\cong\CC[\xxx_N]/\II(\ZZZ)\cong\CC[\xxx_N]/\gr\II(\ZZZ)=:R(\ZZZ)\] where the first isomorphism arises from multivariate Lagrange interpolation but the second isomorphism is not explicit (i.e. without an explicit $G$-equivariant linear map). In fact, the second isomorphism is an abstract isomorphism arising from the following $G$-equivariant linear maps
\begin{align}\label{eq:grade-isom}\CC[\xxx_N]_d/\gr\II(\ZZZ)_d \overset{\cong}{\longrightarrow}\frac{\CC[\xxx_N]_{\le d}/\II(\ZZZ)\cap\CC[\xxx_N]_{\le d}}{\CC[\xxx_N]_{\le d-1}/\II(\ZZZ)\cap\CC[\xxx_N]_{\le d-1}}\end{align}
induced by
\begin{align*}
    \CC[\xxx_N]_d&\longrightarrow\CC[\xxx_N]_{\le d}/\II(\ZZZ)\cap\CC[\xxx_N]_{\le d} \\
    f &\longmapsto f \mod{\II(\ZZZ)\cap\CC[\xxx_N]_{\le d}}
\end{align*}
which maps $\gr\II(\ZZZ)_d$ into $\CC[\xxx_N]_{\le d-1}/\II(\ZZZ)\cap\CC[\xxx_N]_{\le d-1}$. The Map~\eqref{eq:grade-isom} is bijective since the standard result \cite[Lemma 2.2]{liu2025involution}.

\subsection{Partitions and Young tableaux}\label{subsec:tableau}


Given a positive integer $n\in\mathbb{N}$, a \emph{partition} $\lambda$ of $n$ is a weakly decreasing sequence of nonnegative integers $\{\lambda_i\}_{i=1}^\infty$ such that $\sum_{i=1}^\infty\lambda_i = n$. In this case, we write $\lambda\vdash n$. We usually identify partition $\lambda$ with its \emph{Young diagram} obtained by placing $\lambda_i$ boxes in the $i$-th row ($i=1,2,\cdots$). A \emph{standard Young tableau} of shape $\lambda$ is a bijective filling of $[n]$ into the boxes of $\lambda$, such that the entries are rightwards increasing along each row and downwards increasing along each column. A \emph{semi-standard Young tableau} of shape $\lambda$ is a filling of positive integers $\mathbb{N}$ into the boxes of $\lambda$, such that the entries are rightwards weakly increasing along each row and downwards strongly increasing along each column. Write $\SYT(\lambda)\coloneqq\{\text{standard Young tableaux of shape $\lambda$}\}$ and $\SSYT(\lambda)\coloneqq\{\text{semi-standard Young tableaux of shape $\lambda$}\}$. For a semi-standard Young tableau $P\in\SSYT(\lambda)$, we write $\mathrm{sh}(P) = \lambda$, which means that $P$ is of shape $\lambda$. For instance, $\lambda = (7,4,4,2)\vdash 17$ is a partition. Its Young diagram and one element in $\SYT(\lambda)$ is shown below.
\begin{center}
    \ydiagram{7,4,4,2}\qquad\begin{ytableau}
        1 & 3 & 7 & 10 & 11 & 15 & 17 \cr
        2 & 4 & 8 & 12 \cr
        5 & 6 & 9 & 13 \cr
        14 & 16
    \end{ytableau}
\end{center}

Robinson-Schensted-Knuth correspondence \cite[Section 4] {fulton1997young} is a one-to-one correspondence $\mathrm{RSK}$ from $\mathbb{N}\times\mathbb{N}$ matrices with entries in $\ZZ_{\ge 0}$ (all but finitely many entries are $0$) to pairs of semi-standard Young tableaux of the same shape. $\mathrm{RSK}$ has several equivalent definitions, among which the most classical one uses the row insertion operation. In particular, the restriction of $\mathrm{RSK}$ to permutation matrices yields an elegant bijection
\[\symm_n\overset{1:1}{\longrightarrow}\bigsqcup_{\lambda\vdash n}(\SYT(\lambda)\times\SYT(\lambda)).\]

We will use two standard results for $\mathrm{RSK}$ in Section~\ref{sec:future}. See \cite[Section 4.2, Exercise 4]{fulton1997young} for details.

\begin{proposition}\label{prop:sym-rsk}
    Let $A$ be an $n\times n$ matrix with nonnegative integer entries, and write $\mathrm{RSK}(A)=(P,Q)$. Then $P=Q$ if and only if $A$ is symmetric.
\end{proposition}

\begin{proposition}\label{prop:sym-rsk-odd}
    Let $A$ be a symmetric $n\times n$ matrix with nonnegative integer entries. Then $\mathrm{RSK}(A) = (P,P)$ where the number of odd columns of $\mathrm{sh}(P)$ equals the trace of $A$.
\end{proposition}

\subsection{Symmetric functions}\label{subsec:sym-func}

Write $\Lambda = \bigoplus_{n\ge 0}\Lambda_n$ for the graded algebra of symmetric functions in an infinite variable set $\xxx=\{x_1,x_2,\cdots\}$ with coefficient field $\CC(q)$. Each graded component $\Lambda_n$ has two linear basis: the complete homogeneous functions $\{h_\lambda\}_{\lambda\vdash n}$ and the Schur functions $\{s_\lambda\}_{\lambda\vdash n}$. The definitions of both can be found in \cite{macdonald1998symmetric}.

For $f=\sum_{\lambda}c_\lambda(q)\cdot s_\lambda\in\Lambda$ where $c_\lambda(q)\in\CC(q)$ and a condition $P$ on partitions, we introduce the truncation notation
\[\{f\}_P\coloneqq\sum_{\text{$\lambda$ satisfies $P$}}c_\lambda(q)\cdot s_\lambda.\]

The multiplication of Schur functions by complete homogeneous functions satisfies Pieri's rule: For any partition $\mu$ and any integer $a\ge 0$, we have that
\[s_\mu\cdot h_a = \sum_{\lambda/\mu}s_\lambda\] summing over horizontal stripes $\lambda/\mu$ of size $a$. Here, a \emph{horizontal stripe} $\lambda/\mu$ is a pair of partitions $\mu\subseteq\lambda$ such that the complement diagram $\lambda\setminus\mu$ has at most one box in each column. The size $\lvert\lambda/\mu\rvert$ of $\lambda/\mu$ is given by $\lvert\lambda/\mu\rvert\coloneqq\lvert\lambda\rvert-\lvert\mu\rvert$. Interestingly, we can deduce the above-stated Pieri's rule from its tableau version \cite[Lemma 4.11]{nelsen2003kostka}:

\begin{proposition}[the tableau version of Pieri's rule]\label{prop:tab-pieri}
    Let $\mu$ be a partition and $a\ge0$ be an integer. Then we have the bijection
    \[\SSYT(\mu)\times\SSYT(a)\overset{1:1}{\longrightarrow}\bigsqcup_{\substack{\text{horizontal stripe $\lambda/\mu$}\\\lvert\lambda/\mu\rvert = a}}\SSYT(\lambda)\]
    where each pair $(P,Q)\in\SSYT(\mu)\times\SSYT(a)$ is mapped to the tableau $R$ obtained by inserting all entries of $Q$ (one by one and from left to right) into $P$ using the row insertion algorithm.
\end{proposition}

In addition to multiplication, $\Lambda$ carries an operation called \emph{plethysm}
\begin{align*}
    \Lambda\times\Lambda &\longrightarrow \Lambda \\
    (f,g) &\longmapsto f[g].
\end{align*}
A standard property of plethysm is that: For any integers $d\ge 0$,
\[h_d[h_2] = \sum_{\substack{\lambda\vdash 2d\\ \text{$\lambda$ is even}}}s_\lambda\] where ``$\lambda$ is even'' means that all parts $\lambda_i$ of $\lambda$ is even. See \cite{macdonald1998symmetric} for more details about plethysm.

\subsection{Representation theory of symmetric groups}\label{subsec:sym-gp}
Let $\symm_n$ be the symmetric group of $[n]$. All irreducible representations of $\symm_n$ are classified by \emph{Specht modules} $\{V^\lambda\}_{\lambda\vdash n}$ (see \cite[Section 7]{fulton1997young} for details). $V^\lambda$ satisfies a standard property
\[\dim V^\lambda = \lvert\SYT(\lambda)\rvert.\]

Consider a finite-dimensional $\symm_n$-module $V$. Suppose that $V\cong\bigoplus_{\lambda\vdash n}c_\lambda V^\lambda$ for some $c_\lambda\in\ZZ_{\ge 0}$. We can describe the module structure of $V$ using its \emph{Frobenius image} $\Frob(V)\in\Lambda_n$ given by
\[\Frob(V)\coloneqq\sum_{\lambda\vdash n}c_\lambda\cdot s_\lambda.\] Furthermore, if $V=\bigoplus_{d=0}^m V_d$ is a graded $\symm_n$-module, its graded module structure can be described using its \emph{graded Frobenius image} $\grFrob(V;q)\in\Lambda_n$ given by
\[\grFrob(V;q)\coloneqq\sum_{d=0}^m q^d \cdot \Frob(V_d).\]

\subsection{Basic settings for our main result}\label{subsec:(background)basic-setting}

Let $\MMM_{n,a}\coloneqq\{w\in\symm_n\,:\,\text{$w^2=1$ and $w$ has exactly $a$}$\\$\text{fixed points}\}$ and identify $w\in\MMM_{n,a}$ with its permutation matrix. Therefore, $\MMM_{n,a}\subseteq\Mat_{n\times n}(\CC)$ is a finite matrix locus carrying the conjugate action of $\symm_n$ given by $g(w) = gwg^{-1}$.

Applying the orbit harmonics method to $\MMM_{n,a}$, we obtain a graded $\symm_n$-module
\[R(\MMM_{n,a}) = \CC[\xxx_{n\times n}]/\gr\II(\MMM_{n,a})\]
where the variable set is $\xxx_{n\times n}\coloneqq\{x_{i,j}\,:\,i,j\in[n]\}$. Liu, Ma, Rhoades, and Zhu figured out a signed combinatorial formula \cite[Theorem 5.21]{liu2025involution} for $\grFrob(R(\MMM_{n,a});q)$, which will play an essential role in Section~\ref{sec:bad-ver}:

\begin{theorem}
    \label{thm:conjugacy-module-character}
    Suppose $a \equiv n \mod 2$. The graded Frobenius image of $R(\MMM_{n,a})$ is given by
    \begin{equation}\label{eq:conjugacy-module-character}
        \grFrob(R(\MMM_{n,a});q) = \sum_{d \, = \, 0}^{(n-a)/2} \{
            h_d[h_2] \cdot h_{n-2d} - h_{d-1}[h_2] \cdot h_{n-2d+2}
        \}_{\lambda_1 \leq n-2d+a} \cdot q^d
    \end{equation}  
    where we interpret $h_{-1} := 0$.
\end{theorem}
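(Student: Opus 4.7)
The plan is to compute $\grFrob(R(\MMM_{n,a});q)$ by first describing the graded ideal $\gr\II(\MMM_{n,a})$ explicitly, extracting a standard monomial basis of $R(\MMM_{n,a})$, and organizing this basis via an RSK/Viennot-type bijection for involutions to match the bigraded character against the claimed signed formula.

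First I would identify generators of $\II(\MMM_{n,a})$. Since elements of $\MMM_{n,a}$ are symmetric $n\times n$ permutation matrices of trace $a$, the vanishing ideal contains the symmetry relations $x_{i,j}-x_{j,i}$, the stochastic relations $\sum_j x_{i,j}-1$ and $\sum_i x_{i,j}-1$, the Boolean relations $x_{i,j}^2-x_{i,j}$, the clash relations $x_{i,j}x_{i,k}$ for $j\neq k$, and the trace relation $\sum_i x_{i,i}-a$. Taking top-degree components yields an initial set of generators for $\gr\II(\MMM_{n,a})$; the first technical step is to show that these, together with any further Pl\"ucker-like relations forced by the symmetric-matrix constraint, suffice to generate the whole graded ideal. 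This would mirror the matrix-locus analyses of \cite{rhoades2024increasing,liu2024viennot}, specialized to the involution setting.

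Next I would fix a term order and extract the standard monomial basis for $R(\MMM_{n,a})$. Using a Viennot shadow construction tailored to symmetric $0/1$ matrices, each $w\in\MMM_{n,a}$ yields a single RSK insertion tableau $P(w)=Q(w)$ of some shape $\lambda(w)\vdash n$ with exactly $a$ odd columns, together with a natural statistic $\mathrm{stat}(w)$ coming from the shadow diagram that plays the role of polynomial degree in $R(\MMM_{n,a})$. The goal of this step is to biject standard monomials with pairs $(P,D)$, where $P\in\SYT(\lambda)$ and $D$ encodes extra grading data, so that the $V^\lambda$-isotypic multiplicity in degree $d$ matches the Schur coefficient of the right-hand side of \eqref{eq:conjugacy-module-character}. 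The two summands there then admit a clean interpretation: $h_d[h_2]\cdot h_{n-2d}=\Frob(\CC[\MMM_{n,n-2d}])$ truncated by $\lambda_1\le n-2d+a$ is the expected contribution from involutions with $n-2d$ fixed points and a longest-row bound coming from Greene's theorem, while $h_{d-1}[h_2]\cdot h_{n-2d+2}$ subtracts the overcount arising from a canonical inclusion $\MMM_{n,n-2d+2}\hookrightarrow\MMM_{n,n-2d}$ that splits off an inert $2$-cycle and raises the degree statistic by one.

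The main obstacle is the ideal-level step in the second paragraph: determining exactly which generators beyond the naive list are required to cut out $\gr\II(\MMM_{n,a})$, and verifying that the resulting standard monomial basis is genuinely enumerated by the pairs $(P,D)$ above. A secondary difficulty is showing that the Viennot shadow statistic correctly tracks the polynomial degree, which would likely require a ``$q$-shadow'' refinement in the spirit of \cite{liu2024viennot} adapted to symmetric matrices. Once the bijection is established, the final character matching reduces to a direct application of Pieri's rule and the plethystic identity $h_d[h_2]=\sum_{\mu\vdash 2d,\,\mu\text{ even}}s_\mu$, which can be carried out formally without further combinatorial input.
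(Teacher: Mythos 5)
This theorem is not proved in the present paper at all: it is imported verbatim as \cite[Theorem 5.21]{liu2025involution} and used as a black box throughout Sections~3 and~4. There is therefore no ``paper's own proof'' to compare against, and the proposal must be judged as a standalone attempt.

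As such, what you have written is a research plan, not a proof, and you say so yourself. The overall architecture --- pin down generators of $\gr\II(\MMM_{n,a})$, extract a standard monomial basis, organize it by a Viennot-type shadow statistic adapted to symmetric $0/1$ matrices, and then match characters via Pieri and $h_d[h_2]=\sum_{\mu\vdash 2d\text{ even}}s_\mu$ --- is consistent with how the orbit-harmonics-on-matrix-loci literature proceeds, and the identity $\Frob(\CC[\MMM_{n,b}])=h_{(n-b)/2}[h_2]\cdot h_b$ that you invoke is correct. But the two steps you flag as ``obstacles'' are in fact the entire content of the theorem. Determining that the naive relations (symmetry, row/column sums, Boolean, clash, trace) together with whatever extra symmetric-matrix relations you allude to actually generate $\gr\II(\MMM_{n,a})$ is where all the work lives; without it, nothing downstream is justified. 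Likewise, constructing a degree-tracking shadow statistic for the conjugation action of a single $\symm_n$ on symmetric matrices (rather than the $\symm_n\times\symm_n$ row/column action on all permutation matrices, where RSK produces a pair of tableaux rather than one) is a genuine adaptation that is not automatic, and you do not indicate how to carry it out.

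Two more concrete misstatements. First, there is no ``canonical inclusion $\MMM_{n,n-2d+2}\hookrightarrow\MMM_{n,n-2d}$'': an involution with more fixed points is not an involution with fewer, and pairing two chosen fixed points into a transposition is not $\symm_n$-equivariantly canonical, so the subtracted term $h_{d-1}[h_2]\cdot h_{n-2d+2}$ cannot be ``an overcount from a canonical inclusion'' in the naive sense you describe. Second, the truncation $\{\,\cdot\,\}_{\lambda_1\le n-2d+a}$ is not part of $\Frob(\CC[\MMM_{n,n-2d}])$; it and the signed subtraction are artifacts of the degree filtration on $\CC[\MMM_{n,a}]$ itself, and explaining exactly why the filtration produces the bound $\lambda_1\le n-2d+a$ and this particular inclusion--exclusion is precisely the subtle technical point your plan defers. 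In short, the proposal points in a plausible direction but does not constitute a proof, and the parts it leaves open are exactly the parts that make the theorem nontrivial.
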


To study the index set of the Schur expansion of the right-hand side of Equation~\eqref{eq:conjugacy-module-character} using Pieri's rule, we need the following constructions.

\begin{definition}\label{def:path}
    Given a horizontal stripe $\lambda/\mu$, we associate a lattice path $\lp{\lambda/\mu}$ of length $\infty$ with it sequentially as follows:
    \begin{itemize}
        \item Let $\lp{\lambda/\mu}$ start at the origin $(0,0)$.
        \item Scan all the columns of $\lambda$ rightwards. If one column intersects $\lambda/\mu$, append an NE step (i.e. the vector $(1,1)$) to $\lp{\lambda/\mu}$. Otherwise, append an $SE$ step (i.e. the vector $(1,-1)$) to $\lp{\lambda/\mu}$.
        \item Append infinitely many $SE$ steps to $\lp{\lambda/\mu}$.
    \end{itemize}
\end{definition}

\begin{example}\label{ex:path}
    Consider $\lambda=(10,9,6,4,4,3)$ and $\mu=(10,6,4,4,4,2)$. That is, 
    \[\lambda/\mu =  \ydiagram[*(white) \bullet]
 {10+0,6+3,4+2,4+0,4+0,2+1}
 *[*(white)]{10,9,6,4,4,3}.\]
 Then the lattice path $\lp{\lambda/\mu}$ is shown as follows.

\def\sequence{-1,-1,1,-1,1,1,1,1,1,-1,-1,-1,-1}
\def\gridscale{1}
\begin{tikzpicture}[scale=\gridscale]
    \tikzmath{
    integer \x,\y,\miny,\maxy;
    \x=0;\y=0;\miny=0;\maxy=0;
    for \s in \sequence {
    {\draw[->,line width=2pt,color = blue] (\x,\y) -- (\x+1,\y+\s);};
    \x=\x+1;\y=\y+\s;
    if \y>\maxy then {\maxy = \y;};
    if \y<\miny then {\miny = \y;};
    };
    {\draw[line width=2pt,color=blue] (\x,\y) -- (\x+0.5,\y-0.5);
    \draw[gray!70, thin] (0,\miny-0.5) grid (\x+1,\maxy+0.5);
    \draw[very thick, -Stealth] (0,0) -- (\x+1,0) node[below] {$x$};
    \draw[thick, -Stealth] (0,\miny-0.5) -- (0,\maxy+0.5) node[left]  {$y$};};
    }
\end{tikzpicture}

Note that we can also reconstruct a horizontal stripe $\lambda/\mu$ if we know $\lambda$ and $\lp{\lambda/\mu}$.
\end{example}

\begin{definition}\label{def:pair}
    For a given lattice path $\mathcal{L}$ starting at $(0,0)$ and consisting of infinitely many NE steps and SE steps, an ordered pair $(i,j)$ of positive integers $0<i<j$ is called a \emph{reflection pair} of $\mathcal{L}$ if $(i,j)$ satisfies all of the following three conditions:
    \begin{itemize}
        \item The $i$-th step of $\mathcal{L}$ (denoted by $\mathcal{L}_i$) is an NE step.
        \item The $j$-th step of $\mathcal{L}$ (denoted by $\mathcal{L}_j$) is an SE step.
        \item The horizontal rightward ray beamed by $\mathcal{L}_i$ touches $\mathcal{L}_j$ without obscured by the lattice path $\mathcal{L}$.
    \end{itemize}
\end{definition}

\begin{example}\label{ex:pair}
    All the reflection pairs of the lattice path $\lp{\lambda/\mu}$ shown in Example~\ref{ex:path} are $(3,4)$, $(5,14)$, $(6,13)$, $(7,12)$, $(8,11)$, and $(9,10)$. See the picture below for details.
    
    \def\sequence{-1,-1,1,-1,1,1,1,1,1,-1,-1,-1,-1,-1}
    \def\gridscale{1}
    \begin{tikzpicture}[scale=\gridscale]
    \tikzmath{
    integer \x,\y,\miny,\maxy;
    \x=0;\y=0;\miny=0;\maxy=0;
    for \s in \sequence {
    {\draw[->,line width=2pt,color = blue] (\x,\y) -- (\x+1,\y+\s);};
    \x=\x+1;\y=\y+\s;
    if \y>\maxy then {\maxy = \y;};
    if \y<\miny then {\miny = \y;};
    };
    {\draw[line width=2pt,color=blue] (\x,\y) -- (\x+0.5,\y-0.5);
    \draw[gray!70, thin] (0,\miny-0.5) grid (\x+1,\maxy+0.5);
    \draw[very thick, -Stealth] (0,0) -- (\x+1,0) node[below] {$x$};
    \draw[thick, -Stealth] (0,\miny-0.5) -- (0,\maxy+0.5) node[left]  {$y$};
    \draw[red,dashed,very thick] (2.5,-1.5) -- (3.5,-1.5);
    \node[above,red] at (2.5,-1.5) {$3$};
    \node[above,red] at (3.5,-1.5) {$4$};
    };
    int \j;
    for \i in {5,6,7,8,9} {
    \j=19-\i;
    {\draw[red,dashed,very thick] (\i-0.5,\i-6.5) -- (18.5-\i,\i-6.5);
    \node[above,red] at (\i-0.5,\i-6.5) {$\i$};
    \node[above,red] at (18.5-\i,\i-6.5) {$\j$};};
    };
    }
\end{tikzpicture}

\end{example}

\begin{definition}\label{def:width}
    The \emph{width} $\wid{\lambda/\mu}$ of a horizontal stripe $\lambda/\mu$ is given by
    \[\wid{\lambda/\mu}\coloneqq\max\{\lambda_1,\max\{j\,:\,\text{$(i,j)$ is a reflection pair of $\lp{\lambda/\mu}$ for some $i$}\}\}\]
    by the convention that $\max\varnothing = 0$.
\end{definition}

\begin{remark}\label{rmk:width}
   There is another equivalent way to define $\wid{\lambda/\mu}$. Let $\{c_i\}_{i=1}^{\lambda_1}$ be the sequence given by
   \[c_i\coloneqq\begin{cases}
       -1, &\text{if the $(\lambda_1-i+1)$-th column of $\lambda$ intersects $\lambda/\mu$} \\
       1, &\text{otherwise}
   \end{cases}\]
   and let $M$ be the maximum prefix summation of $\{c_i\}_{i=1}^{\lambda_1}$. Then
   \[\wid{\lambda/\mu} = \lambda_1 + \max\{0,M\}.\]
\end{remark}

\begin{example}\label{ex:width}
    Let $\lambda/\mu$ be the horizontal stripe in Example~\ref{ex:path}. Then Example~\ref{ex:pair} indicates that
    \[\max\{j\,:\,\text{$(i,j)$ is a reflection pair of $\lp{\lambda/\mu}$ for some $i$}\} = 14\]
    and thus \[\wid{\lambda/\mu} = \max\{\lambda_1,14\} = \max\{10,14\} = 14.\]
\end{example}

\section{Bad version: a positive combinatorial formula depending on the graded component}\label{sec:bad-ver}

In this section, we give a positive combinatorial version of Theorem~\ref{thm:conjugacy-module-character} by the cancellation through embedding all the terms of the Schur expansion of $\{h_{d-1}[h_2]\cdot h_{n-2d+2}\}_{\lambda_1\le n-2d+a}$ into the Schur expansion of $\{h_{d}[h_2]\cdot h_{n-2d}\}_{\lambda_1\le n-2d+a}$. That is, we provide a positive combinatorial formula for the Schur expansion of $\{
            h_d[h_2] \cdot h_{n-2d} - h_{d-1}[h_2] \cdot h_{n-2d+2}
        \}_{\lambda_1 \leq n-2d+a}$.

Initially, we need some notations. From now on, \emph{fix} two integers $n$ and $a$ such that $n>0$, $0\le a\le n$ and $a\equiv n \mod{2}$. Let $0\le d\le\frac{n-a}{2}$ be an integer and $\lambda\vdash n$ be a partition such that $\lambda_1\le n-2d+a$. We introduce two sets of horizontal stripes given by:

\begin{align}
\label{eq:def-horizontal-stripe-set}
    \hori{d,\lambda} & \coloneqq \{\lambda/\mu\,:\, \text{$\mu\vdash 2d$ is an even partition such that $\lambda/\mu$ is a horizontal stripe}\} \\
\label{eq:def-positive-horizontal-stripe-set}
    \horipositive{d,\lambda} & \coloneqq \{\lambda/\mu\in\hori{d,\lambda}\,:\,\text{the first $\lambda_1$ steps of $\lp{\lambda/\mu}$ are weakly higher than the $x$-axis}\}.
\end{align}

\begin{example}\label{ex:def-horizontal-stripe-set}
    Recall the horizontal stripe $\lambda/\mu$ in Example~\ref{ex:path} given by $\lambda = (10,9,6,4,4,3)$ and $\mu = (10,6,4,4,4,2)$. Since $\mu\vdash 30$ is an even partition, we have that $\lambda/\mu\in\hori{15,\lambda}$. However, $\lp{\lambda/\mu}$ goes below the $x$-axis at the first step, revealing that $\lambda/\mu\notin\horipositive{d,\lambda}$. 
\end{example}

Further assume that $d>0$. Then we construct a map (in fact a bijection which will be shown in Lemma~\ref{lem:map-NHS-to-HS-bijective})
\begin{align}\label{eq:map-NHS-to-HS}  \Phi_{d,\lambda}:\hori{d,\lambda}\setminus\horipositive{d,\lambda}\longrightarrow\hori{d-1,\lambda}.
\end{align} That is, given a horizontal stripe $\lambda/\mu\in\hori{d,\lambda}\setminus\horipositive{d,\lambda}$, we construct its image $\Phi_{d,\lambda}(\lambda/\mu)\in\hori{d-1,\lambda}$ using the following steps sequentially:
\begin{itemize}
    \item Let $m\ge 0$ be the smallest integer such that $x=m$ is one of the lowest points of the restricted lattice path $\lp{\lambda/\mu}\mid_{0\le x\le\lambda_1}$, i.e. $x=m$ is the first lowest point of $\lp{\lambda/\mu}\mid_{0\le x\le\lambda_1}$.
    \item The lowest points of $\lp{\lambda/\mu}\mid_{0\le x\le\lambda_1}$ is strictly lower than the $x$-axis since $\lambda/\mu\in\hori{d,\lambda}\setminus\horipositive{d,\lambda}$. Consequently, we have that $m>0$. Furthermore, $\mu$ is an even partition, indicating that $m$ is even. Therefore, $m\ge 2$.
    \item As a result of $m\ge 2$, the $(m-1)$-th step and $m$-th step of $\lp{\lambda/\mu}$ exist. Moreover, both of these two steps are SE steps since $x=m$ is the first lowest point of $\lp{\lambda/\mu}\mid_{0\le x\le\lambda_1}$. Therefore, removing the lowest boxes respectively from the $(m-1)$-th column and $m$-th column of $\mu$ (note that both boxes are on the same row) yields a new even partition $\nu\vdash 2d-2$ such that $\lambda/\nu$ is still a horizontal stripe.
    \item Define $\Phi_{d,\lambda}(\lambda/\mu)$ by $\Phi_{d,\lambda}(\lambda/\mu)\coloneqq\lambda/\nu$.
\end{itemize}
\begin{example}\label{ex:map-HHS-to-HS}
    We use a concrete example to illustrate how to obtain $\Phi_{d,\lambda}(\lambda/\mu)$. Consider $\lambda=(17,14,13,8,3,2)$ and $\mu=(14,14,12,6,2)$, yielding a horizontal stripe
    \[\lambda/\mu = \ydiagram[*(white) \bullet]
 {14+3,14+0,12+1,6+2,2+1,0+2}
 *[*(white)]{17,14,13,8,3,2}.\] Then the lattice path $\lp{\lambda/\mu}$ is shown as follows.
 
 \def\sequence{1,1,1,-1,-1,-1,1,1,-1,-1,-1,-1,1,-1,1,1,1,-1,-1,-1,-1}
 \def\gridscale{0.7}
 \begin{tikzpicture}[scale=\gridscale]
    \tikzmath{
    integer \x,\y,\miny,\maxy;
    \x=0;\y=0;\miny=0;\maxy=0;
    for \s in \sequence {
    {\draw[->,line width=2pt,color = blue] (\x,\y) -- (\x+1,\y+\s);};
    \x=\x+1;\y=\y+\s;
    if \y>\maxy then {\maxy = \y;};
    if \y<\miny then {\miny = \y;};
    };
    {\draw[line width=2pt,color=blue] (\x,\y) -- (\x+0.5,\y-0.5);
    \draw[gray!70, thin] (0,\miny-0.5) grid (\x+1,\maxy+0.5);
    \draw[red,very thick] (12,\miny) -- (12,\maxy) node[above] {$x=12$};
    \draw[purple,very thick] (17,\miny) -- (17,\maxy) node[above] {$x=\lambda_1$};
    \draw[very thick, -Stealth] (0,0) -- (\x+1,0) node[below] {$x$};
    \draw[thick, -Stealth] (0,\miny-0.5) -- (0,\maxy+0.5) node[left]  {$y$};};
    }
\end{tikzpicture}
Note that $x=12$ is the first lowest point of $\lp{\lambda/\mu}_{0\le x\le\lambda_1}$ and hence $m=12$. Now we remove the lowest boxes respectively from the $11$-th and $12$-th columns of $\mu$, or, equivalently, add two boxes $(3,11)$ and $(3,12)$ into the horizontal stripe $\lambda/\mu$, obtaining a new horizontal stripe
\[\Phi_{d,\lambda}(\lambda/\mu) = \lambda/\nu = \ydiagram[*(white) \textcolor{red}{\bullet}]{17+0,14+0,10+2,8+0,3+0,2+0}*[*(white) \bullet]
 {14+3,14+0,10+3,6+2,2+1,0+2}
 *[*(white)]{17,14,13,8,3,2}.\]
\begin{remark}\label{rmk:turning}It is helpful to understand $\Phi_{d,\lambda}$ as a ``turning upwards" operation on lattice paths, which will be used in the proof of Lemma~\ref{lem:map-NHS-to-HS-bijective}. Intuitively, the lattice path $\lp{\Phi_{d,\lambda}(\lambda/\mu)}$ arises from $\lp{\lambda/\mu}$ by turning the $(m-1)$-th and $m$-th steps upwards as the following figure. We will see that $x=m-2$ is the last lowest point of $\lp{\Phi_{d,\lambda}(\lambda/\mu)}\mid_{0\le x\le \lambda_1}$.

 \def\gridscale{0.7}
 \def\sequence{1,1,1,-1,-1,-1,1,1,-1,-1,1,1,1,-1,1,1,1,-1,-1,-1,-1}
 \def\oldsequence{-1,-1,1,-1,1,1,1,-1,-1,-1,-1}
 \begin{tikzpicture}[scale=\gridscale]
    \tikzmath{
    integer \x,\y,\miny,\maxy;
    \x=0;\y=0;\miny=0;\maxy=0;
    for \s in \sequence {
    {\draw[->,line width=2pt,color = blue] (\x,\y) -- (\x+1,\y+\s);};
    \x=\x+1;\y=\y+\s;
    if \y>\maxy then {\maxy = \y;};
    if \y<\miny then {\miny = \y;};
    };
    {\draw[line width=2pt,color=blue] (\x,\y) -- (\x+0.5,\y-0.5);};
    \x=10; \y=0;
    for \s in \oldsequence {
    {\draw[->,dashed,line width=2pt,color = blue] (\x,\y) -- (\x+1,\y+\s);};
    \x=\x+1;\y=\y+\s;
    if \y>\maxy then {\maxy = \y;};
    if \y<\miny then {\miny = \y;};
    };
    {\draw[dashed,line width=2pt,color=blue] (\x,\y) -- (\x+0.5,\y-0.5);
    \draw[gray!70, thin] (0,\miny-0.5) grid (\x+1,\maxy+0.5);
    \draw[red,very thick] (12,\miny) -- (12,\maxy) node[above] {$x=m$};
    \draw[purple,very thick] (17,\miny) -- (17,\maxy) node[above] {$x=\lambda_1$};
    \draw[very thick, -Stealth] (0,0) -- (\x+1,0) node[below] {$x$};
    \draw[thick, -Stealth] (0,\miny-0.5) -- (0,\maxy+0.5) node[left]  {$y$};};
    }
\end{tikzpicture}
\end{remark}
\end{example}

\begin{lemma}\label{lem:map-NHS-to-HS-bijective}
    The map $\Phi_{d,\lambda}$ in \eqref{eq:map-NHS-to-HS} is bijective.
\end{lemma}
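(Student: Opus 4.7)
The plan is to produce an explicit inverse map $\Psi_{d,\lambda}\colon\hori{d-1,\lambda}\to\hori{d,\lambda}\setminus\horipositive{d,\lambda}$ and then verify $\Phi_{d,\lambda}\circ\Psi_{d,\lambda}=\mathrm{id}=\Psi_{d,\lambda}\circ\Phi_{d,\lambda}$. Given $\lambda/\nu\in\hori{d-1,\lambda}$, I let $\ell$ be the largest $x\in[0,\lambda_1]$ at which $\lp{\lambda/\nu}$ attains its minimum on $[0,\lambda_1]$, and I set $\Psi_{d,\lambda}(\lambda/\nu)=\lambda/\mu$, where $\mu$ is obtained from $\nu$ by flipping the $(\ell+1)$-th and $(\ell+2)$-th steps of $\lp{\lambda/\nu}$ from NE to SE; geometrically, two boxes are added to $\nu$ in row $\nu^t_{\ell+1}+1$ at columns $\ell+1$ and $\ell+2$.

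The structural workhorse is the observation that an even partition $\mu$ is characterized by $\mu^t_{2i-1}=\mu^t_{2i}$ for every $i\ge 1$; equivalently, in the associated lattice path, an NE at an even position forces an NE at the preceding odd position, so every ``valley'' (SE immediately followed by NE) occurs at an even $x$-coordinate. Consequently the first lowest point $m$ used by $\Phi_{d,\lambda}$ and the last lowest point $\ell$ used by $\Psi_{d,\lambda}$ are both even, and the columns $\ell+1,\ell+2$ form one of the equal-height column pairs of $\nu$. Setting $Y(i):=y(2i)$, the increments $Y(i)-Y(i-1)\in\{-2,0,+2\}$; since $Y$ first attains its minimum at $i=m/2$, a short induction yields $y(x)\ge\min+2$ on $[0,m-2]$ with equality precisely at $x=m-2$ (the largest even $x\le m-2$), and by a symmetric argument $y_{\lambda/\nu}(x)\ge\min+2$ for the relevant $x>\ell$.

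I verify, in order: (a) $\ell+2\le\lambda_1$---this is the only place the constraint $\lambda_1\le n-2d+a$ is genuinely used; combined with $d\le(n-a)/2$ it gives $\lambda_1\le 2n-4d$, which together with $y_{\lambda/\nu}(\lambda_1)=2n-4d+4-\lambda_1\ge 4$ rules out $\ell\in\{\lambda_1-1,\lambda_1\}$. (b) Steps $\ell+1,\ell+2$ are both NE, since $\ell$ being the last minimum forces $y_{\lambda/\nu}(\ell+1)=\min+1$ and then $y_{\lambda/\nu}(\ell+2)=\min+2$. (c) $\mu$ is a valid even partition of $2d$ with $\mu\subseteq\lambda$ and $\lambda/\mu$ a horizontal stripe; the strict inequality $\nu^t_\ell\ge\lambda^t_\ell\ge\lambda^t_{\ell+1}=\nu^t_{\ell+1}+1$ (from step $\ell$ being SE and step $\ell+1$ being NE) ensures the two boxes really land in a valid row, while $\lambda^t_{\ell+1}=\lambda^t_{\ell+2}$ (from $\nu$ even plus both steps NE) yields $\mu^t_{\ell+1}=\mu^t_{\ell+2}$ so $\mu$ stays even. (d) $\lambda/\mu\in\hori{d,\lambda}\setminus\horipositive{d,\lambda}$ with first lowest point of $\lp{\lambda/\mu}$ at $\ell+2$: the flip lowers the $y$-coordinate by $4$ on $[\ell+2,\lambda_1]$ and by $2$ at $\ell+1$, bringing $y_{\lambda/\mu}(\ell+2)=\min-2<0$; the parity-plus-pairing fact prevents $y_{\lambda/\nu}(x)=\min+1$ for odd $x>\ell$, which is exactly what rules out a competing lower point after the flip. (e) The two compositions are identities: by (d), $\Phi_{d,\lambda}$ reads off $\ell+2$ as the first lowest point of $\lp{\lambda/\Psi_{d,\lambda}(\lambda/\nu)}$ and removes the two boxes just added; symmetrically, the inductive statement above shows that $\lp{\lambda/\Phi_{d,\lambda}(\lambda/\mu)}$ has last lowest point exactly $m-2$, so $\Psi_{d,\lambda}$ flips the same pair back.

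The main obstacle is the parity-and-pairing bookkeeping in (b) and (d). One has to translate ``$\nu$ even'' consistently into constraints on $\lp{\lambda/\nu}$ and rule out spurious extra minima after flipping; the boundary cases $\ell=0$ and $\ell$ near $\lambda_1$ are handled by the arithmetic in (a), and everything else reduces to short parity or inductive arguments.
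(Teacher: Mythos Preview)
Your proof is correct and follows essentially the same route as the paper: the paper's surjectivity argument constructs exactly your inverse map $\Psi_{d,\lambda}$ via the \emph{last} lowest point $m'$ (your $\ell$) of $\lp{\lambda/\nu}\mid_{[0,\lambda_1]}$, and its injectivity argument is precisely your claim that $m-2$ is the last lowest point of $\lp{\Phi_{d,\lambda}(\lambda/\mu)}$, i.e.\ your verification that $\Psi_{d,\lambda}\circ\Phi_{d,\lambda}=\mathrm{id}$. The only differences are organizational (you package the two halves as an explicit two-sided inverse rather than as separate injectivity/surjectivity checks) together with a harmless typo in (c), where $\nu^t_\ell\ge\lambda^t_\ell$ should read $\nu^t_\ell=\lambda^t_\ell$.
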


\begin{proof}
Fix $n,a,d,\lambda$ such that $n>0$, $0\le a \le n$, $a\equiv n\mod{2}$, $0<d\le\frac{n-a}{2}$, $\lambda\vdash n$, $\lambda_1 \le n-2d+a$.

\textbf{Injectivity:} For a horzontal stripe $\lambda/\mu\in\hori{d,\lambda}\setminus\horipositive{d,\lambda}$, let $x=m$ be the first lowest point of the restricted lattice path $\lp{\lambda/\mu}\mid_{0\le x\le\lambda_1}$, and let $y=h$ be the height of this point. Write $\Phi_{d,\lambda}(\lambda/\mu) = \lambda/\nu$.
\begin{center}
    \textbf{Claim:} $x=m-2$ is the last lowest point of $\lp{\lambda/\nu}\mid_{0\le x\le\lambda_1}$.
\end{center}
In fact, we had shown that $m\ge 2$ during the construction of $\Phi_{d,\lambda}$. Note that $\lp{\lambda/\nu}$ arises from $\lp{\lambda/\mu}$ by turning the $(m-1)$-th and $m$-th steps upwards (see Remark~\ref{rmk:turning}). This ``turning upwards" operation moves all the nodes of $\lp{\lambda/\mu}\mid_{x\ge m}$ upwards by $4$ units. Consequently, all the nodes of $\lp{\lambda/\nu}\mid_{m\le x\le\lambda_1}$ is weakly higher than $y=h+4$. In addition, the ``turning" operation moves nodes $\lp{\lambda/\mu}\mid_{x=m-1}$ and $\lp{\lambda/\mu}\mid_{x=m}$ strongly above $y=h+2$. Therefore, $\lp{\lambda/\nu}\mid_{m-2<x\le\lambda_1}$ is strongly higher than $y=h+2$, the height of $\lp{\lambda/\nu}\mid_{x=m-2}$. \\
Furthermore, since $\mu$ is an even partition, all the minimal-height (\emph{NOT ``minimum"!}) points of $\lp{\lambda/\mu}$ possess even heights. Recall that $x=m$ is the first lowest point of $\lp{\lambda/\mu}\mid_{0\le x\le\lambda_1}$, indicating that all the minimal-height points of $\lp{\lambda/\nu}\mid_{0\le x<m-2} = \lp{\lambda/\mu}\mid_{0\le x<m-2}$ are strongly higher than $y=h$ and thus weakly higher than $y=h+2$, the height of $\lp{\lambda/\nu}\mid_{x=m-2}$. \\
To sum up, we have shown that $x=m-2$ is the last lowest point of $\lp{\lambda/\nu}\mid_{0\le x\le\lambda_1}$. Now our claim above has been proved. \\
Now suppose that $\Phi_{d,\lambda}(\lambda/\mu^{(1)}) = \Phi_{d,\lambda}(\lambda/\mu^{(2)}) = \lambda/\nu$. We show that $\lambda/\mu^{(1)} = \lambda/\mu^{(2)}$ as means to deduce that $\Phi_{d,\lambda}$ is injective. Let $x=m_i$ be the first lowest point of $\lp{\lambda/\mu^{(i)}}\mid_{0 \le x \le \lambda_1}$ ($i=1,2$). Then the claim above indicates that both $x=m_i-2$ ($i=1,2$) are the last lowest point of $\lp{\lambda/\nu}\mid_{0 \le x \le \lambda_1}$ and hence $m_1 = m_2$. Therefore, since the same ``turning upwards" operations (see Remark~\ref{rmk:turning}) on $\lp{\lambda/\mu^{(1)}}$ and $\lp{\lambda/\mu^{(2)}}$ generate the same result $\lp{\lambda/\nu}$, we have that $\lp{\lambda/\mu^{(1)}} = \lp{\lambda/\mu^{(2)}}$ and thus $\lambda/\mu^{(1)} = \lambda/\mu^{(2)}$ (because $\lp{\lambda/\mu}$ tells us which columns of $\lambda$ intersect the horizontal stripe $\lambda/\mu$, providing enough information to reconstruct $\lambda/\mu$). As a result, $\Phi_{d,\lambda}$ is injective.

\textbf{Surjectivity:} Given $\lambda/\nu\in\hori{d-1,\lambda}$. Note that $\lambda_1\le n-2d+a$ and $\lp{\lambda/\nu}$ goes downwards after the $\lambda_1$-th step. Therefore, let $H$ be the height of $\lp{\lambda/\nu}\mid_{x=\lambda_1}$. Then \begin{align*}
    H&\ge\lp{\lambda/\nu}\mid_{x=n-2d+a} =\lvert\lambda/\nu\rvert-(n-2d+a-\lvert\lambda/\nu\rvert) \\ &= n-2d+2-(n-2d+a-(n-2d+2)) = n -2d -a +4 \\&\ge n - 2\cdot\frac{n-a}{2} - a + 4 =4>0.
    \end{align*} However, the starting point $(0,0)$ of $\lp{\lambda/\nu}$ is of height $0$, indicating that $x=\lambda_1$ is not the lowest point of $\lp{\lambda/\nu}\mid_{0\le x\le\lambda_1}$. Now let $x=m^{\prime}<\lambda_1$ be the last lowest point of $\lp{\lambda/\nu}\mid_{0\le x\le\lambda_1}$. We further deduce that $m^\prime\le\lambda_1-4$ from the inequality $H\ge 4$ shown above. Since $x=m^\prime$ is the last lowest point of $\lp{\lambda/\nu}\mid_{0\le x\le\lambda_1}$, both the $(m^\prime+1)$-th and $(m^\prime+2)$-th steps of $\lp{\lambda/\nu}\mid_{0\le x\le\lambda_1}$ must be NE steps. Turn these two steps downwards, obtaining a new lattice path $\lp{\lambda/\mu}$ for some horizontal stripe $\lambda/\mu\in\hori{d,\lambda}\setminus\horipositive{d,\lambda}$ (we can reconstruct $\lambda/\mu$ from $\lambda$ and $\lp{\lambda/\mu}$, and $\lambda/\mu\notin\horipositive{d,\lambda}$ since the lowest point of $\lp{\lambda/\nu}$ must be weakly lower than the origin). Then Remark~\ref{rmk:turning} indicates that $\Phi_{d,\lambda}(\lambda/\mu) = \lambda/\nu$. In conclusion, $\Phi_{d,\lambda}$ is surjective.
\end{proof}

We are ready to state and prove our first positive combinatorial formula for $\grFrob(R(\MMM_{n,a});q)$.

\begin{proposition}\label{prop:bad-ver-formula}
    For integers $n,a,d$ such that $n>0$, $0\le a \le n$, $a\equiv n\mod{2}$, $0<d\le\frac{n-a}{2}$, we have that
    \[\Frob(R(\MMM_{n,a})_d) = \sum_{\lambda/\mu} s_\lambda\] summing over all the horizontal stripes
    \[\lambda/\mu\in\bigsqcup_{\substack{\lambda\vdash n \\ \lambda_1 \le n-2d+a}}\horipositive{d,\lambda}\]
    where $\horipositive{d,\lambda}$ is given by Equation~\eqref{eq:def-positive-horizontal-stripe-set}.
\end{proposition}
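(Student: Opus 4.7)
The plan is to extract the degree-$d$ coefficient of $\grFrob(R(\MMM_{n,a});q)$ from Theorem~\ref{thm:conjugacy-module-character}, rewrite each of the two terms $h_d[h_2]\cdot h_{n-2d}$ and $h_{d-1}[h_2]\cdot h_{n-2d+2}$ as a nonnegative sum of Schur functions whose coefficients count sets of horizontal stripes, and then cancel the minus sign using the bijection $\Phi_{d,\lambda}$ constructed in Lemma~\ref{lem:map-NHS-to-HS-bijective}.

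Concretely, first I would combine the plethysm identity $h_d[h_2]=\sum_{\mu\vdash 2d,\ \mu\text{ even}}s_\mu$ with Pieri's rule to obtain
\[h_d[h_2]\cdot h_{n-2d}=\sum_{\lambda\vdash n}|\hori{d,\lambda}|\cdot s_\lambda\quad\text{and}\quad h_{d-1}[h_2]\cdot h_{n-2d+2}=\sum_{\lambda\vdash n}|\hori{d-1,\lambda}|\cdot s_\lambda,\]
with $\hori{d,\lambda}$ as in~\eqref{eq:def-horizontal-stripe-set}. Both expansions are indexed by the same set $\{\lambda\vdash n\}$, since in each case $|\mu|+|\lambda/\mu|=n$. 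Consequently the truncation operator $\{\cdot\}_{\lambda_1\le n-2d+a}$ acts uniformly across the two terms, so that
\[\Frob(R(\MMM_{n,a})_d)=\sum_{\substack{\lambda\vdash n\\ \lambda_1\le n-2d+a}}\bigl(|\hori{d,\lambda}|-|\hori{d-1,\lambda}|\bigr)\cdot s_\lambda.\]
Then the bijection $\Phi_{d,\lambda}:\hori{d,\lambda}\setminus\horipositive{d,\lambda}\to\hori{d-1,\lambda}$ from Lemma~\ref{lem:map-NHS-to-HS-bijective} yields $|\hori{d,\lambda}|-|\hori{d-1,\lambda}|=|\horipositive{d,\lambda}|$, and the statement follows after rewriting the single sum $\sum_\lambda|\horipositive{d,\lambda}|\cdot s_\lambda$ as a sum indexed by horizontal stripes in $\bigsqcup_\lambda\horipositive{d,\lambda}$.

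The main obstacle in this argument is precisely the sign cancellation, which demands a bijection between the ``bad'' elements of one Schur expansion and the entirety of the other; this has already been dispatched in Lemma~\ref{lem:map-NHS-to-HS-bijective} via the ``turning'' construction on lattice paths. Once that lemma is in hand, all remaining work here is formal manipulation in $\Lambda$, invoking only the plethysm identity for $h_d[h_2]$, Pieri's rule, and the fact that the truncation condition $\lambda_1\le n-2d+a$ applies symmetrically to both terms of~\eqref{eq:conjugacy-module-character}.
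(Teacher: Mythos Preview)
Your proposal is correct and follows essentially the same route as the paper: extract the $q^d$-coefficient from Theorem~\ref{thm:conjugacy-module-character}, expand each term via the plethysm identity $h_d[h_2]=\sum_{\mu\text{ even}}s_\mu$ and Pieri's rule into sums indexed by $\hori{d,\lambda}$ and $\hori{d-1,\lambda}$, then invoke the bijection of Lemma~\ref{lem:map-NHS-to-HS-bijective} to cancel down to $\horipositive{d,\lambda}$. The paper phrases the cancellation as a replacement of index sets rather than as an equality of cardinalities, but the content is identical.
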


\begin{proof}
    Theorem~\ref{thm:conjugacy-module-character} indicates that
    \[\Frob(R(\MMM_{n,a})_d) = \{
            h_d[h_2] \cdot h_{n-2d} - h_{d-1}[h_2] \cdot h_{n-2d+2}
        \}_{\lambda_1 \leq n-2d+a}\] where we interpret $h_{-1}=0$.
    Therefore, the $d=0$ case is easily verified. For $0<d\le\frac{n-a}{2}$, applying Pieri's rule to the formula above yields that
    \begin{align*}
        \Frob(R(\MMM_{n,a})_d) = \sum_{\substack{\lambda\vdash n \\ \lambda_1\le n-2d+a}}\sum_{\lambda/\mu\in\hori{d,\lambda}}s_\lambda - \sum_{\substack{\lambda\vdash n \\ \lambda_1\le n-2d+a}}\sum_{\lambda/\mu\in\hori{d-1,\lambda}}s_\lambda
    \end{align*}
    Then Lemma~\ref{lem:map-NHS-to-HS-bijective} helps us replace the index set of the last summation with $\hori{d,\lambda}\setminus\horipositive{d,\lambda}$, indicating that
    \begin{align*}
        \Frob(R(\MMM_{n,a})_d) &= \sum_{\substack{\lambda\vdash n \\ \lambda_1\le n-2d+a}}\sum_{\lambda/\mu\in\hori{d,\lambda}}s_\lambda - \sum_{\substack{\lambda\vdash n \\ \lambda_1\le n-2d+a}}\sum_{\lambda/\mu\in\hori{d,\lambda}\setminus\horipositive{d,\lambda}}s_\lambda \\
        &= \sum_{\substack{\lambda\vdash n \\ \lambda_1\le n-2d+a}}\sum_{\lambda/\mu\in\horipositive{d,\lambda}}s_\lambda
    \end{align*}
    which completes the proof.
\end{proof}

\section{Good version: a positive combinatorial formula as an explicit graded refinement of $h_{(n-a)/2}[h_2]\cdot h_a$}\label{sec:good-ver}
In this section, we will prove Theorem~\ref{thm:main} using Proposition~\ref{prop:bad-ver-formula}. Note that Theorem~\ref{thm:main} is conciser than Proposition~\ref{prop:bad-ver-formula}, providing an explicit graded refinement of $\Frob(\CC[\MMM_{n,a}]) = \Frob(R(\MMM_{n,a})) = h_{(n-a)/2}[h_2]\cdot h_a$ in the form of a positive combinatorial formula.

In order to convert Proposition~\ref{prop:bad-ver-formula} into Theorem~\ref{thm:main}, we need to construct bijections between their index sets. We introduce some basic settings and notations as follows.

\emph{Fix} two integers $n$ and $a$ such that $n>0$, $0\le a\le n$ and $a\equiv n \mod{2}$. Let $0\le d\le\frac{n-a}{2}$ be an integer and $\lambda\vdash n$ be a partition such that $\lambda_1\le n-2d+a$. Then keep the notations in Section~\ref{sec:bad-ver}. In addition, define the set (recall Definition~\ref{def:width} for the definition of the statistic width $\wid{}$)
\begin{align}\label{eq:def-of-good-ver-set}
    \horiwid{d,\lambda} \coloneqq \{\lambda/\mu\in\hori{(n-a)/2,\lambda}\,:\,\wid{\lambda/\mu} = n-2d+a\}.
\end{align}

Then define the \emph{left shadow map}
\begin{align}\label{eq:left-shadow-map}
    \leftshadow{d,\lambda}\,:\,\horipositive{d,\lambda} \longrightarrow \horiwid{d,\lambda}
\end{align}
through the following steps in order:
\begin{itemize}
    \item Given $\lambda/\mu\in\horipositive{d,\lambda}$, construct a set \[S=\{i\in[\lambda_1]\,:\,\text{$(i,j)$ is a reflection pair of $\lp{\lambda/\mu}$ for some $j\in[n-2d+a]$}\}.\]
    (Recall the term ``reflection pair" given by Definition~\ref{def:pair}.)
    \item Let $\lambda/\nu$ be the unique horizontal stripe such that all the columns of $\lambda$ intersecting $\lambda/\nu$ are exactly indexed by the set $S$.
    \item Let $\leftshadow{d,\lambda}(\lambda/\mu)\coloneqq\lambda/\nu$.
\end{itemize}

\begin{lemma}\label{lem:left-shadow-map-well-def}
    $\leftshadow{d,\lambda}$ is well-defined, i.e. $\lambda/\nu\in\horiwid{d,\lambda}$.
\end{lemma}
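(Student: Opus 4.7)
The plan is to verify that $\nu$ is a partition of $n-a$ with $\lambda/\nu$ a horizontal stripe, that $\nu$ is even, and that $\wid{\lambda/\nu} = n-2d+a$.

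Throughout I write $\pi'_j$ for the $j$-th column height of a partition $\pi$. For the partition/horizontal-stripe claim, since $\nu'_j = \lambda'_j - [j \in S] \in \{\lambda'_j - 1, \lambda'_j\}$, the only way $\nu'_i < \nu'_{i+1}$ could fail is if $\lambda'_i = \lambda'_{i+1}$, $i \in S$, and $i+1 \notin S$. If $\lambda'_i = \lambda'_{i+1}$, the combination of the horizontal-stripe condition $\lambda'_j - \mu'_j \in \{0,1\}$ with $\mu'_i \ge \mu'_{i+1}$ rules out the $(\mathrm{NE}, \mathrm{SE})$ step pattern at positions $(i, i+1)$ of $\lp{\lambda/\mu}$, so $i \in S$ (step $i$ is NE) forces step $i+1$ to be NE too. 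Standard bracket nesting then gives that the partner of step $i+1$ precedes that of step $i$, so $i \in S$ forces $i+1 \in S$. This forbids the bad case, so $\nu$ is a partition and $\lambda/\nu$ is a horizontal stripe.

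To see $|\nu| = n-a$ and $\nu$ is even, I would count $|S|$ and analyze the paired structure of the unmatched NE steps. Every NE matched within the first $\lambda_1$ steps automatically lies in $S$, since its partner is at position $\le \lambda_1 \le n-2d+a$. Let the unmatched NE's be $p_1 < p_2 < \cdots < p_u$ with $u = 2(n-2d) - \lambda_1$; they are matched in the all-SE tail with SE's at positions $\lambda_1 + u + 1 - i$, so $p_i \in S$ iff $i \ge n-2d-a+1$. A short arithmetic check gives $|S| = a$ and therefore $|\nu| = n-a$. For evenness, I use the characterization that a partition $\pi$ is even iff $\pi'_{2i-1} = \pi'_{2i}$ for every $i$. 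Since $\mu$ is even, the pair of steps at positions $(2i-1, 2i)$ in $\lp{\lambda/\mu}$ must be one of $(\mathrm{SE},\mathrm{SE})$, $(\mathrm{NE},\mathrm{NE})$, or $(\mathrm{NE},\mathrm{SE})$; the $(\mathrm{SE},\mathrm{NE})$ case is excluded by $\lambda'_{2i-1} \ge \lambda'_{2i}$ together with $\mu'_{2i-1} = \mu'_{2i}$. Processing the first $\lambda_1$ steps through a push/pop stack pair-by-pair, the invariant that the stack is a union of consecutive-integer pairs $\{2k-1, 2k\}$ coming from $(\mathrm{NE},\mathrm{NE})$ blocks is preserved. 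Hence $\{p_1, \ldots, p_u\}$ is a union of such pairs, and since $n-2d-a$ is even (as $n \equiv a \pmod 2$), so is its initial segment $\{p_1, \ldots, p_{n-2d-a}\}$. Combined with $\mu'_{2i-1} = \mu'_{2i}$, this yields $\nu'_{2i-1} = \nu'_{2i}$, so $\nu$ is even.

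For the width, I would write $\lp{\lambda/\nu}(i) = \lp{\lambda/\mu}(i) - 2\,|\{k \le n-2d-a : p_k \le i\}|$ and use the baseline property that $\lp{\lambda/\mu}(i) \ge k$ for all $i \ge p_k$ (provable by contradiction: any SE after $p_k$ dropping the path below $k$ would have to pop the still-unmatched $p_k$). This makes the minimum of $\lp{\lambda/\nu}$ on $[0, \lambda_1]$ equal to $-(n-2d-a)$, attained at $p_{n-2d-a}$. When $u > n-2d-a$, the NE step at $p_{n-2d-a+1}$ appears in $\lp{\lambda/\nu}$ as an NE starting from this minimum height; a direct calculation in the all-SE tail places its matching SE at position exactly $n-2d+a$, and no SE beyond that position can pair with any NE in $[1, \lambda_1]$ since it sits strictly below the global minimum. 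In the degenerate case $u = n-2d-a$ one has $\lambda_1 = n-2d+a$, every NE in $S$ is matched internally, and the width equals $\lambda_1 = n-2d+a$. Either way, $\wid{\lambda/\nu} = n-2d+a$, so $\lambda/\nu \in \horiwid{d,\lambda}$.

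I expect the main obstacle to be the stack-based pairing claim used to establish evenness of $\nu$. The delicate step is handling the parity of $\lambda_1$: when $\lambda_1$ is odd the final step pair $(\lambda_1, \lambda_1+1)$ straddles the boundary of the first $\lambda_1$ steps, and one must use that position $\lambda_1+1$ defaults to an SE (because $\lambda'_{\lambda_1+1} = 0$) and that this forced $(\mathrm{NE},\mathrm{SE})$ pattern matches away a single pushed NE, to check that the consecutive-integer pairing of unmatched NE positions survives.
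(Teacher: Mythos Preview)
Your argument is essentially correct and takes a more explicit, combinatorial route than the paper. The paper proves the lemma with a pictorial ``light ray'' description: it places a light source to the right of $\lp{\lambda/\mu}\mid_{0\le x\le n-2d+a}$, observes that exactly the $a$ SE steps in that window are illuminated and match to $a$ NE steps (giving $|S|=a$), and then argues in one line that evenness of $\mu$ forces the local minima of $\lp{\lambda/\mu}$ to sit at even heights, ``hence $\nu$ is even''; the width claim is handled by the same picture after flipping the illuminated NE's to SE's. Your proof replaces this picture with an explicit stack/bracket-matching analysis: you verify directly that $\nu$ is a partition (a point the paper glosses over), you process the path in $(2i-1,2i)$ blocks to show the unmatched NE positions come in consecutive pairs, and you compute the width by locating the minimum of $\lp{\lambda/\nu}$. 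Your approach is longer but self-contained and makes every parity step visible; the paper's is shorter but leans on the reader's geometric intuition.

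One small gap to patch: the ``baseline property'' $\lp{\lambda/\mu}(i)\ge k$ for $i\ge p_k$ only gives the lower bound $m^*\ge -(n-2d-a)$ on the minimum of $\lp{\lambda/\nu}$; you also need $m^*\le -(n-2d-a)$ to pin down the width. This amounts to showing $\lp{\lambda/\mu}(p_k)=k$ exactly, not merely $\ge k$. The same stack reasoning gives it: after pushing $p_k$ the stack can contain only $p_1,\dots,p_k$, since any other element $q<p_k$ still on the stack would lie below $p_k$ and hence could never be popped before $p_k$, forcing $q\in\{p_1,\dots,p_u\}$, a contradiction. With this in hand, $\lp{\lambda/\nu}(p_{n-2d-a})=-(n-2d-a)$ and your width computation goes through (and indeed $p_{n-2d-a+1}-1$ is the \emph{last} occurrence of the minimum on $[0,\lambda_1]$, by the baseline at level $k=n-2d-a+1$, which is what makes the partner of the NE at $p_{n-2d-a+1}$ land in the all-SE tail at position $n-2d+a$).
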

\begin{proof}
    First, we show that $\lambda/\nu\in\hori{(n-a)/2,\lambda}$. Note that there are exactly $n-2d+a-\lvert\lambda/\mu\rvert =n-2d+a-(n-2d)=a$ SE steps in $\lp{\lambda/\mu}\mid_{0\le x\le n-2d+a}$. Recall that $\lambda/\mu\in\horipositive{d,\lambda}$ indicates that $\lp{\lambda/\mu}\mid_{0\le x\le \lambda_1}$ is weakly higher than the $x$-axis. Moreover, the height of $\lp{\lambda/\mu}\mid_{x=n-2d+a}$ is $\lvert\lambda/\mu\rvert-(n-2d+a-\lvert\lambda/\mu\rvert) = n-2d-a \ge n-2\cdot\frac{n-a}{2}-a = 0$, and $\lp{\lambda/\mu}\mid_{\lambda_1\le x\le n-2d+a}$ only contains SE steps. Consequently, $\lp{\lambda/\mu}\mid_{0\le x\le n-2d+a}$ is weakly higher than the $x$-axis. It follows that each horizontal leftward ray starting at all of the $a$ SE steps of $\lp{\lambda/\mu}\mid_{0\le x\le n-2d+a}$ must touch an NE step of $\lp{\lambda/\mu}\mid_{0\le x\le \lambda_1}$. As a result, $\lvert S \rvert = a$ and thus $\lvert\lambda/\nu\rvert = a$, indicating that $\nu\vdash n-a$. Furthermore, $\mu$ is an even partition, revealing that all the minimal-height (NOT ``minimum") points of $\lp{\lambda/\mu}$ must have even heights; hence, $\nu$ is also an even partition. Therefore, $\lambda/\nu\in\hori{(n-a)/2,\lambda}$. 

    It remains to show that $\wid{\lambda/\nu} = n - 2 d + a$. Note that we can intuitively construct $\lp{\lambda/\nu}$ according to $\lp{\lambda/\mu}$ sequentially as follows:
    \begin{itemize}
        \item Put a large light source on the right of $\lp{\lambda/\mu}\mid_{0\le x\le n-2d+a}$. It emits horizontal light rays leftwards. Then the heights of all the NE steps touched by these rays must be connected, and their minimum is $0$. Other steps perfectly match each other, forming reflection pairs. 

        \def\sequence{1,-1,1,1,1,-1,1,-1,-1,-1,1,1,1,1,-1,-1,1,1,1,1,1,-1,1,-1,-1,-1}
        \def\gridscale{0.5}
        \begin{tikzpicture}[scale=\gridscale]
            \tikzmath{
            integer \x,\y,\miny,\maxy;
            \x=0;\y=0;\miny=0;\maxy=0;
            for \s in \sequence {
            {\draw[->,line width=2pt,color = blue] (\x,\y) -- (\x+1,\y+\s);};
            \x=\x+1;\y=\y+\s;
            if \y>\maxy then {\maxy = \y;};
            if \y<\miny then {\miny = \y;};
            };
            {
            \draw[gray!70, thin] (0,\miny-0.5) grid (\x+1,\maxy+0.5);
            \draw[very thick, -Stealth] (0,0) -- (\x+1,0) node[below] {$x$};
            \draw[thick, -Stealth] (0,\miny-0.5) -- (0,\maxy+0.5) node[left]  {$y$};
            \draw[->,very thick,red] (27,0.5) -- (11,0.5);
            \draw[line width=2pt,red] (10,0) -- (12,2);
            \draw[->,very thick,red] (27,1.5) -- (12,1.5);
            \draw[->,very thick,red] (27,2.5) -- (17,2.5);
            \draw[line width=2pt,red] (16,2) -- (18,4);
            \draw[->,very thick,red] (27,3.5) -- (18,3.5);
            \draw[purple,very thick] (26,\miny) -- (26,\maxy+0.2) node[above] {\small $x=n-2d+a$};
            };
            }
        \end{tikzpicture}

    \item Whenever the light rays touch an NE step, replace it with an SE step, obtaining $\lp{\lambda/\nu}\mid_{0\le x\le n-2 d+a}$. Then the heights of these new SE steps are connected as well, and their maximum is $0$. In addition, these new SE steps (red in the figure below) and those original NE steps (red in the figure above) are symmetric with respect to the $x$-axis. Other steps perfectly match each other, forming reflection pairs. Therefore, \begin{align}\label{eq:wid-pre}n-2d+a\ge \max\{j\,:\,\text{$(i,j)$ is a reflection pair of $\lp{\lambda/\mu}$ for some $i$}\}.\end{align} Recall that $\lambda_1\le n-2d+a$. Then we have that
    \[n-2d+a \ge\max\{\lambda_1, \max\{j\,:\,\text{$(i,j)$ is a reflection pair of $\lp{\lambda/\mu}$ for some $i$}\}\}.\]
    Moreover, if $\lambda_1<n-2d+a$, then the last step of $\lp{\lambda/\mu}$ is an SE step and hence the equality of \eqref{eq:wid-pre} holds. To sum up, we deduce that
    \[n-2d+a =\max\{\lambda_1, \max\{j\,:\,\text{$(i,j)$ is a reflection pair of $\lp{\lambda/\mu}$ for some $i$}\}\},\] which means that $\wid{\lambda/\nu} = n-2d+a$ according to Definition~\ref{def:width}.
    
    \def\sequence{1,-1,1,1,1,-1,1,-1,-1,-1,-1,-1,1,1,-1,-1,-1,-1,1,1,1,-1,1,-1,-1,-1}
        \def\gridscale{0.5}
        \begin{tikzpicture}[scale=\gridscale]
            \tikzmath{
            integer \x,\y,\miny,\maxy;
            \x=0;\y=0;\miny=0;\maxy=0;
            for \s in \sequence {
            {\draw[->,line width=2pt,color = blue] (\x,\y) -- (\x+1,\y+\s);};
            \x=\x+1;\y=\y+\s;
            if \y>\maxy then {\maxy = \y;};
            if \y<\miny then {\miny = \y;};
            };
            {
            \draw[gray!70, thin] (0,\miny-0.5) grid (\x+1,\maxy+0.5);
            \draw[very thick, -Stealth] (0,0) -- (\x+1,0) node[below] {$x$};
            \draw[thick, -Stealth] (0,\miny-0.5) -- (0,\maxy+0.5) node[left]  {$y$};
            \draw[line width=2pt,red] (10,0) -- (12,-2);
            \draw[line width=2pt,red] (16,-2) -- (18,-4);
            \draw[purple,very thick] (26,\miny) -- (26,\maxy+0.2) node[above] {\small $x=n-2d+a$};
            \draw[red,dashed,very thick] (0.5,0.5) -- (1.5,0.5);
            \draw[red,dashed,very thick] (2.5,0.5) -- (9.5,0.5);
            \draw[red,dashed,very thick] (3.5,1.5) -- (8.5,1.5);
            \draw[red,dashed,very thick] (4.5,2.5) -- (5.5,2.5);
            \draw[red,dashed,very thick] (6.5,2.5) -- (7.5,2.5);
            \draw[red,dashed,very thick] (12.5,-1.5) -- (15.5,-1.5);
            \draw[red,dashed,very thick] (13.5,-0.5) -- (14.5,-0.5);
            \draw[red,dashed,very thick] (18.5,-3.5) -- (25.5,-3.5);
            \draw[red,dashed,very thick] (19.5,-2.5) -- (24.5,-2.5);
            \draw[red,dashed,very thick] (20.5,-1.5) -- (21.5,-1.5);
            \draw[red,dashed,very thick] (22.5,-1.5) -- (23.5,-1.5);
            };
            }
        \end{tikzpicture}
    \end{itemize}

    In one word, we have shown that $\lambda/\nu\in\hori{(n-a)/2,\lambda}$ and $\wid{\lambda/\nu} = n-2d+a$, indicating that $\lambda/\nu\in\horiwid{d,\lambda}$.
\end{proof}

Now we want to construct the inverse map of $\leftshadow{d,\lambda}$ given by \eqref{eq:left-shadow-map}, called the \emph{right shadow map}.

\begin{align}\label{eq:def-of-right-shadow-map}
    \rightshadow{d,\lambda}\,:\,\horiwid{d,\lambda} \longrightarrow \horipositive{d,\lambda}.
\end{align}

Given $\lambda/\nu\in\horiwid{d,\lambda}$, we associate $\rightshadow{d,\lambda}(\lambda/\nu)$ using the following steps:
\begin{itemize}
    \item Let $T=\{j\in\mathbb{N}\,:\,\text{$(i,j)$ is a reflection pair for some $i$}\}$.
    \item $\lambda/\nu\in\horiwid{d,\lambda}$ indicates that $\wid{\lambda/\nu} = n-2d+a$, so we have that $\max T \le n-2d+a$. Let $\lambda/\mu$ be the unique horizontal stripe such that all the columns of $\lambda$ intersecting $\lambda/\mu$ are exactly indexed by $[n-2d+a]\setminus T$.
    \item Let $\rightshadow{d,\lambda}(\lambda/\nu)\coloneqq\lambda/\mu$.
\end{itemize}

\begin{lemma}\label{lem:right-shadow-map-well-def}
    $\rightshadow{d,\lambda}$ is well-defined, i.e. $\lambda/\mu\in\horipositive{d,\lambda}$.
\end{lemma}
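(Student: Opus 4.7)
The plan is to verify the three conditions defining $\horipositive{d,\lambda}$ for $\lambda/\mu \coloneqq \rightshadow{d,\lambda}(\lambda/\nu)$: that $\mu$ is a valid partition of $2d$, that $\mu$ is even, and that $\lp{\lambda/\mu}|_{0 \le x \le \lambda_1}$ lies weakly above the $x$-axis. The central tool is a bracket-matching interpretation of reflection pairs: reading $\lp{\lambda/\nu}$ left to right, treat each NE step as an opening bracket and each SE step as a closing bracket processed by a shared stack, so that reflection pairs $(i,j)$ are precisely the innermost matchings. This partitions the first $\lambda_1$ steps of $\lp{\lambda/\nu}$ into matched NE's $M_{NE}$, unmatched NE's $U_{NE}$, matched SE's $M_{SE}$, and unmatched SE's $U_{SE}$; the unmatched NE's of $U_{NE}$ are subsequently matched in reverse order with the immediate SE tail past position $\lambda_1$.

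First I would unpack the width hypothesis. The first $\lambda_1$ steps of $\lp{\lambda/\nu}$ contain $a$ NE and $\lambda_1 - a$ SE steps, so $|U_{NE}| - |U_{SE}| = 2a - \lambda_1$. Combined with $\max T = \lambda_1 + |U_{NE}|$ when $|U_{NE}| \ge 1$ (and $\max T \le \lambda_1$ otherwise), $\wid{\lambda/\nu} = n-2d+a$, and $\lambda_1 \le n-2d+a$, this yields $|U_{NE}| = n-2d+a-\lambda_1$ and $|U_{SE}| = n-2d-a$. In particular $\{\lambda_1+1,\ldots,n-2d+a\} \subseteq T$, so $[n-2d+a]\setminus T \subseteq [\lambda_1]$, the columnwise specification defines a candidate shape $\lambda/\mu$ inside $\lambda$, and $|\lambda/\mu| = (n-2d+a) - |T| = n-2d$. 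Step-by-step comparison of $\lp{\lambda/\nu}$ and $\lp{\lambda/\mu}$ gives $\mu'_i = \nu'_i$ outside $U_{SE}$ and $\mu'_i = \nu'_i - 1$ inside $U_{SE}$.

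The evenness of $\nu$, namely $\nu'_{2k-1} = \nu'_{2k}$, together with $\lambda'_{2k-1} \ge \lambda'_{2k}$, restricts each consecutive-column pair $(2k-1, 2k)$ in $\lp{\lambda/\nu}$ to type NE-NE, SE-SE, or NE-SE (never SE-NE). Processing the path pair by pair preserves the invariant that the stack size is even at every pair boundary: NE-NE adds two elements, NE-SE pushes then pops, and SE-SE either pops two elements or leaves both SE's unmatched. Hence $U_{SE}$ decomposes into adjacent column-pairs $\{2k-1, 2k\}$, immediately giving $\mu'_{2k-1} = \mu'_{2k}$, so $\mu$ is even. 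The same invariant rules out the only possible obstruction to $\mu'$ being weakly decreasing, namely a position $i \in [\lambda_1] \setminus T$ with $i+1 \in T$ and $\lambda'_i = \lambda'_{i+1}$: a short case analysis on whether $i$ lies in $M_{NE}$, $U_{NE}$, or $U_{SE}$ reduces everything to the last case, where $i = 2k$ with $2k-1 \in U_{SE}$ and $2k+1 \in M_{SE}$; but this forces the pair $(2k+1, 2k+2)$ to be an SE-SE pair matched from an empty stack immediately after the empty-stack SE-SE pair $(2k-1, 2k)$, a contradiction.

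For the positivity of $\lp{\lambda/\mu}|_{0 \le x \le \lambda_1}$, a direct comparison gives $h_\mu(k) - h_\nu(k) = 2 \cdot |\{i \le k : i \in U_{SE}\}|$. Writing $U_{SE} = \{j_1 < \cdots < j_q\}$ in increasing order, the subpath of $\lp{\lambda/\nu}$ between $j_r$ and $j_{r+1}$ starts from an empty stack and contains only matched SE's, so the standard stack-height identity gives $h_\nu \ge -r$ on $[j_r, j_{r+1} - 1]$; hence $h_\mu \ge -r + 2r \ge 0$ there, with the boundary intervals $[0, j_1 - 1]$ and $[j_q, \lambda_1]$ handled identically. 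The main obstacle will be the synchronized bookkeeping in the third paragraph, where the evenness of $\mu$ and its validity as a partition both hinge on the same stack-parity invariant and must be tracked through every pair type, including the edge cases when $\lambda_1$ is odd or a pair sits at the right boundary of $\lambda$.
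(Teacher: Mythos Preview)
Your argument is correct and follows the same underlying idea as the paper: both proofs identify the steps to be flipped as exactly the unmatched SE steps of $\lp{\lambda/\nu}$ (your bracket-matching $U_{SE}$ is precisely the paper's ``SE steps touched by light from the left''), and both deduce positivity of $\lp{\lambda/\mu}$ from the resulting upward shift of the path between consecutive unmatched SE's. Your treatment is considerably more rigorous than the paper's pictorial sketch: in particular, you explicitly verify via the stack-parity invariant that $\mu$ is an \emph{actual partition} (i.e.\ that $\mu'$ is weakly decreasing), a point the paper's proof silently assumes when it writes ``let $\lambda/\mu$ be the unique horizontal stripe\ldots''; and your height inequality $h_\mu(k)=h_\nu(k)+2r\ge r$ on $[j_r,j_{r+1}-1]$ makes precise the paper's one-line claim that symmetry of the flipped steps about the $x$-axis forces the whole path above it.
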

\begin{proof}
    Initially, we show that $\mu\vdash 2d$. Since $\nu\vdash\frac{n-a}{2}$ indicates that $\lvert\lambda/\nu\rvert = a$, $\lp{\lambda/\nu}$ totally has $a$ NE steps and hence $\lvert T \rvert = a$. Therefore, $\lvert\lambda/\mu\rvert = \lvert [n-2d+a]\setminus T \rvert =n-2d+a-a=n-2d$, revealing that $\mu\vdash 2d$. 
    
    It remains to show that $\lp{\lambda/\mu}\mid_{0 \le x \le n-2d+a}$ is weakly higher than the $x$-axis. Similarly to the proof of Lemma~\ref{lem:left-shadow-map-well-def}, we describe how to construct $\lp{\lambda/\mu}$ according to $\lp{\lambda/\nu}$ intuitively. The construction needs two steps:
    \begin{itemize}
        \item Put a large light source on the left of $\lp{\lambda/\nu}$ and let it emit horizontal light rays rightwards. Then the heights of all the SE steps touched by these rays must be connected, and their maximum is $0$.

        \def\sequence{1,-1,1,1,1,-1,1,-1,-1,-1,-1,-1,1,1,-1,-1,-1,-1,1,1,1,-1,1,-1,-1,-1}
        \def\gridscale{0.5}
        \begin{tikzpicture}[scale=\gridscale]
            \tikzmath{
            integer \x,\y,\miny,\maxy;
            \x=0;\y=0;\miny=0;\maxy=0;
            for \s in \sequence {
            {\draw[->,line width=2pt,color = blue] (\x,\y) -- (\x+1,\y+\s);};
            \x=\x+1;\y=\y+\s;
            if \y>\maxy then {\maxy = \y;};
            if \y<\miny then {\miny = \y;};
            };
            {
            \draw[line width=2pt,color = blue] (\x,\y) -- (\x+0.5,\y-0.5);
            \draw[gray!70, thin] (0,\miny-0.5) grid (\x+1,\maxy+0.5);
            \draw[very thick, -Stealth] (0,0) -- (\x+1,0) node[below] {$x$};
            \draw[thick, -Stealth] (0,\miny-0.5) -- (0,\maxy+0.5) node[left]  {$y$};
            \draw[line width=2pt,red] (10,0) -- (12,-2);
            \draw[line width=2pt,red] (16,-2) -- (18,-4);
            \draw[purple,very thick] (26,\miny) -- (26,\maxy+0.2) node[above] {\small $x=n-2d+a$};
            \draw[->,very thick,red] (0,-0.5) -- (10,-0.5);
            \draw[->,very thick,red] (0,-1.5) -- (11,-1.5);
            \draw[->,very thick,red] (0,-2.5) -- (16,-2.5);
            \draw[->,very thick,red] (0,-3.5) -- (17,-3.5);
            \draw[->,very thick,red] (0,-4.5) -- (26,-4.5);
            \draw[line width = 2pt,blue] (26,-4) -- (27,-5);
            };
            }
        \end{tikzpicture}

        \item Replace all the SE steps of $\lp{\lambda/\nu}\mid_{0\le x\le n-2d+a}$ touched by light rays with NE steps to obtain $\lp{\lambda/\mu}$. Clearly, these new NE steps (red in the figure below) and old SE steps (red in the figure above) are symmetric with respect to the $x$-axis, so these new NE steps are weakly higher than the $x$-axis, indicating that $\lp{\lambda/\mu}\mid_{0 \le x \le n-2d+a}$ is weakly higher than the $x$-axis, which completes the proof.

        \def\sequence{1,-1,1,1,1,-1,1,-1,-1,-1,1,1,1,1,-1,-1,1,1,1,1,1,-1,1,-1,-1,-1}
        \def\gridscale{0.5}
        \begin{tikzpicture}[scale=\gridscale]
            \tikzmath{
            integer \x,\y,\miny,\maxy;
            \x=0;\y=0;\miny=0;\maxy=0;
            for \s in \sequence {
            {\draw[->,line width=2pt,color = blue] (\x,\y) -- (\x+1,\y+\s);};
            \x=\x+1;\y=\y+\s;
            if \y>\maxy then {\maxy = \y;};
            if \y<\miny then {\miny = \y;};
            };
            {
            \draw[line width=2pt,color = blue] (\x,\y) -- (\x+0.5,\y-0.5);
            \draw[gray!70, thin] (0,\miny-0.5) grid (\x+1,\maxy+0.5);
            \draw[very thick, -Stealth] (0,0) -- (\x+1,0) node[below] {$x$};
            \draw[thick, -Stealth] (0,\miny-0.5) -- (0,\maxy+0.5) node[left]  {$y$};
            \draw[line width=2pt,red] (10,0) -- (12,2);
            \draw[line width=2pt,red] (16,2) -- (18,4);
            \draw[purple,very thick] (26,\miny) -- (26,\maxy+0.2) node[above] {\small $x=n-2d+a$};
            };
            }
        \end{tikzpicture} 
    \end{itemize}
\end{proof}

Now we mention the last technical result before the proof of Theorem~\ref{thm:main}.

\begin{lemma}\label{lem:inverse-map}
    $\leftshadow{d,\lambda}$ and $\rightshadow{d,\lambda}$ are the inverse map of each other.
\end{lemma}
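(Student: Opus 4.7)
The plan is to reinterpret both $\leftshadow_{d,\lambda}$ and $\rightshadow_{d,\lambda}$ as instances of a single path-flipping operation governed by parenthesis matching, then read off the inverseness from a structural lemma about that operation. Concretely, a reflection pair of a lattice path is exactly a matched pair under the standard left-to-right parenthesis matching that treats NE steps as opening and SE steps as closing parentheses. Under this identification, the set $S$ in the definition of $\leftshadow_{d,\lambda}$ is the set of matched NE positions of $\lp{\lambda/\mu}\mid_{0 \le x \le n-2d+a}$, and so $\leftshadow_{d,\lambda}(\lambda/\mu)$ is obtained from $\lambda/\mu$ by flipping every unmatched NE of $\lp{\lambda/\mu}\mid_{0 \le x \le n-2d+a}$ into an SE step, as in the picture in the proof of Lemma~\ref{lem:left-shadow-map-well-def}. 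Dually, $\rightshadow_{d,\lambda}(\lambda/\nu)$ is obtained from $\lambda/\nu$ by flipping every unmatched SE of $\lp{\lambda/\nu}\mid_{0 \le x \le n-2d+a}$ into an NE step.

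The structural lemma I would prove is the following: for any finite lattice path $\mathcal{L}$ of NE and SE steps, letting $\mathcal{L}'$ be the path obtained by flipping every unmatched NE of $\mathcal{L}$ to an SE, we have (a) the matched pairs of $\mathcal{L}'$ coincide exactly with the matched pairs of $\mathcal{L}$ as index pairs, and (b) the set of unmatched SE positions of $\mathcal{L}'$ is the disjoint union of the unmatched SE positions of $\mathcal{L}$ and the unmatched NE positions of $\mathcal{L}$. The proof goes by induction on the number of matched pairs of $\mathcal{L}$. The base case (zero matched pairs) forces $\mathcal{L}$ to be a block of SEs followed by a block of NEs, after which (a) and (b) are immediate. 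For the inductive step, an innermost matched pair of $\mathcal{L}$ is necessarily an adjacent NE--SE pair $(i,i+1)$, since any strictly interior step would spawn another matched pair, contradicting innermostness; deleting these two positions gives a path $\widetilde{\mathcal{L}}$ with one fewer matched pair and identical unmatched steps, and the flip of $\mathcal{L}$ is the flip of $\widetilde{\mathcal{L}}$ with the same pair reinserted, so the inductive hypothesis applies.

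With the lemma in hand, inverseness is a short check. For $\lambda/\mu \in \horipositive{d,\lambda}$, the path $\lp{\lambda/\mu}\mid_{0 \le x \le n-2d+a}$ is weakly above the $x$-axis (proof of Lemma~\ref{lem:left-shadow-map-well-def}), so it has no unmatched SEs; by (b) the unmatched SE positions of $\lp{\leftshadow_{d,\lambda}(\lambda/\mu)}\mid_{0 \le x \le n-2d+a}$ are exactly the unmatched NE positions of $\lp{\lambda/\mu}\mid_{0 \le x \le n-2d+a}$, and applying $\rightshadow_{d,\lambda}$ flips these back to NEs, reproducing $\lp{\lambda/\mu}$ and hence $\lambda/\mu$. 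The NE--SE dual of the lemma, together with the observation that for $\lambda/\nu \in \horiwid{d,\lambda}$ the path $\lp{\lambda/\nu}\mid_{0 \le x \le n-2d+a}$ has no unmatched NEs, since its ending height $-(n-2d-a) \le 0$ forces every NE to be eventually closed by a later SE, yields $\leftshadow_{d,\lambda} \circ \rightshadow_{d,\lambda} = \mathrm{id}$ by the symmetric argument.

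The main obstacle will be controlling the simultaneous flip: flipping many unmatched NEs at once globally shifts the heights of all subsequent steps, which a priori could disturb matched-pair relations elsewhere. The innermost-pair induction handles this cleanly by isolating the flip to steps outside the peeled-off adjacent NE--SE pair, which is itself trivially matched and unaffected by the flip.
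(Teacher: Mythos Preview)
Your approach rests on the same underlying idea as the paper's but is substantially more rigorous. The paper's proof simply points back to the ``light ray'' pictures in the proofs of the two well-definedness lemmas and asserts that the two lattice-path operations are visibly inverse. You formalize this by recognizing reflection pairs as parenthesis matches, describing both maps as ``flip all unmatched steps of one type in the window $[1,n-2d+a]$'', and proving a structural lemma (matched pairs survive the flip; the new unmatched set is the union of the old unmatched NEs and SEs) via an innermost-pair induction. That lemma is a genuine addition: it explains \emph{why} the simultaneous flip does not disturb existing matches, something the paper leaves entirely to geometric intuition.

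There is one gap. Your justification that $\lp{\lambda/\nu}\mid_{0\le x\le n-2d+a}$ has no unmatched NEs --- ``its ending height $-(n-2d-a)\le 0$ forces every NE to be eventually closed by a later SE'' --- is not sufficient on its own. For instance, the path $\mathrm{SE},\mathrm{SE},\mathrm{NE},\mathrm{NE}$ on $[0,4]$ ends at height~$0$, yet both NEs are unmatched in that window (their matches in the infinite path sit at positions $5$ and $6$). What actually forces the conclusion is the defining condition $\wid{\lambda/\nu}=n-2d+a$ of $\horiwid{d,\lambda}$: together with $\lambda_1\le n-2d+a$, this says precisely that every reflection pair $(i,j)$ of the full infinite path has $j\le n-2d+a$, so every NE is matched inside the window. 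Replace the ending-height sentence by this, and your argument goes through.
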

\begin{proof}
    The proofs of Lemma~\ref{lem:left-shadow-map-well-def} and Lemma~\ref{lem:right-shadow-map-well-def} respectively illustrate how to convert $\lp{\lambda/\mu}$ into $\lp{\leftshadow{d,\lambda}(\lambda/\mu)}$ and how to convert $\lp{\lambda/\nu}$ into $\lp{\rightshadow{d,\lambda}(\lambda/\nu)}$ using ``horizontal rays" intuitively. Note that these two operations on lattice paths are the inverse of each other. Furthermore, we can reconstruct a horizontal stripe using the lattice path associated with it. Therefore, $\leftshadow{d,\lambda}$ and $\rightshadow{d,\lambda}$ are the inverse of each other.
\end{proof}

Finally, we are ready to prove Theorem~\ref{thm:main}.

\begin{proof}[Proof of Theorem~\ref{thm:main}]
    By Proposition~\ref{prop:bad-ver-formula}, we have that
    \[\Frob(R(\MMM_{n,a})_d) = \sum_{\substack{\lambda\vdash n \\ \lambda_1\le n-2d+a}}\sum_{\lambda/\mu\in\horipositive{d,\lambda}} s_\lambda.\]
    The one-to-one correspondence between $\horipositive{d,\lambda}$ and $\horiwid{d,\lambda}$ arising from Lemma~\ref{lem:inverse-map} enables us to replace the index set of the last summation above with $\horiwid{d,\lambda}$. That is,
    \[\Frob(R(\MMM_{n,a})_d) = \sum_{\substack{\lambda\vdash n \\ \lambda_1 \le n-2d+a}}\sum_{\lambda/\mu\in\horiwid{d,\lambda}}s_\lambda,\]
    which is equivalent to Theorem~\ref{thm:main}. (Recall the definition of $\horiwid{d,\lambda}$ in Equation~\eqref{eq:def-of-good-ver-set}.)
\end{proof}

\section{Application}\label{sec:appli}

Recall that Proposition~\ref{prop:bad-ver-formula} assigns a restriction $\lambda_1\le n-2d+a$ to the index range of the summation. In fact, we can remove this restriction in some cases.

\begin{corollary}\label{cor:bad-ver-remove-len-res}
    For integers $n,a,d$ in Proposition~\ref{prop:bad-ver-formula}, if further $d\le a$, then
    \[\Frob(R(\MMM_{n,a})_d) = \sum_{\lambda/\mu}s_\lambda\]
    where $\lambda/\mu$ ranges over $\bigsqcup_{\lambda \vdash n}\horipositive{d,\lambda}$.
\end{corollary}
\begin{proof}
    By concentrating on the summation index sets of Proposition~\ref{prop:bad-ver-formula} and Corollary~\ref{cor:bad-ver-remove-len-res}, it suffices to show that $\horipositive{d,\lambda} = \varnothing$ whenever $\lambda_1>n-2d+a$. Assume, for the sake of contradiction, that $\lambda/\mu\in\horipositive{d,\lambda}$. Among columns $1,2,\cdots,\mu_1$, write $c$ for the number of columns intersecting $\lambda/\mu$ and $\mu_1-c$ for the number of columns not intersecting $\lambda/\mu$. Then $c\ge\mu_1-c$ since $\lp{\lambda/\mu}\mid_{0\le x\le\mu_1}$ is weakly higher than the $x$-axis. Consequently, we deduce that $c\ge\frac{\mu_1}{2}$ and hence 
    \begin{align*}
        \lambda_1 &= \mu_1 + |\{\text{cells of $\lambda/\mu$ not under $\mu$}\}| =\mu_1 + |\lambda/\mu|-c = \mu_1 + n-2d -c \\
        &\le \mu_1 + n -2d - \frac{\mu_1}{2} = n-2d + \frac{\mu_1}{2} \le n-2d +d \le n-2d+a 
    \end{align*}
    where the second-to-last sign of inequality arises from $\mu\vdash 2d$. However, the inequality above contradicts $\lambda_1>n-2d+a$.
\end{proof}

Note that the summation in Corollary~\ref{cor:bad-ver-remove-len-res} does not depend on $a$. We thus immediately obtain some $\symm_n$-equivariant isomorphisms of the form $R(\MMM_{n,a})_d \cong R(\MMM_{n,a^\prime})_d$ with $a\neq a^\prime$ as follows.

\begin{corollary}\label{cor:isom}
    For nonnegative integers $n,a,a^\prime,d$ such that $n>0$, $\max\{a,a^\prime\}\le n$, $a\equiv a^\prime\equiv n \mod{2}$, $d\le\min\{a,a^\prime,\frac{n-a}{2},\frac{n-a^\prime}{2}\}$, we have isomorphisms of $\symm_n$-modules
    \[R(\MMM_{n,a})_d\cong R(\MMM_{n,a^\prime})_d.\]
\end{corollary}

\begin{remark}\label{rmk:isom}
    Theorem~\ref{thm:conjugacy-module-character} indicates chains of $\symm_n$-module surjections where $\delta=n\mod{2} \in \{0,1\}$ (\cite[Lemma 5.16]{liu2025involution})
    \begin{scriptsize}
      \begin{displaymath} \displaystyle \scalebox{1}{
        \xymatrix{
        &&&&&R_n(\MMM_{n,\delta})_{\frac{n-\delta}{2}}\\
        &&&&R(\MMM_{n,\delta+2})_{\frac{n-\delta-2}{2}}\ar@{->>}[r] &R(\MMM_{n,\delta})_{\frac{n-\delta-2}{2}}\\
        && & R(\MMM_{n,\delta+4})_{\frac{n-\delta-4}{2}}
        \ar@{->>}[r] &R(\MMM_{n,\delta+2})_{\frac{n-\delta-4}{2}}\ar@{->>}[r] &R(\MMM_{n,\delta})_{\frac{n-\delta-4}{2}}\\
        & &\begin{sideways}$\ddots$\end{sideways} &\vdots &\vdots &\vdots \\
         &R(\MMM_{n,n-2})_{1}\ar@{->>}[r] &\cdots\ar@{->>}[r] &R(\MMM_{n,\delta+4})_{1}\ar@{->>}[r] &R(\MMM_{n,\delta+2})_{ 1}\ar@{->>}[r] &R(\MMM_{n,\delta})_{1}\\
        R(\MMM_{n,n})_{0}\ar@{->>}[r] &R(\MMM_{n,n-2})_{0}\ar@{->>}[r] &\cdots\ar@{->>}[r] &R(\MMM_{n,\delta+4})_{0}\ar@{->>}[r] &R(\MMM_{n,\delta+2})_{0}\ar@{->>}[r] &R(\MMM_{n,\delta})_{0}
        }}
      \end{displaymath}
      \end{scriptsize}

    Interestingly, Corollary~\ref{cor:isom} replaces all the surjections ``$\twoheadrightarrow$" between $\symm_n$-modules $R(\MMM_{n,a})_d$ weakly under the straight line $d=a$ with isomorphisms ``$\cong$". That is, if we use circles and stars to represent each graded component $R(\MMM_{n,a})_d$ above and thus generate the following diagram, then all the stars on the same row are isomorphic. In this example, $n=12$.
    \begin{center}
        \begin{scriptsize}
            \begin{tikzpicture}[scale = 0.4]
                \node at (20,0) {$\star$};

                \node at (22,0) {$\star$};
                \node at (22,2) {$\star$};

                \node at (24,0) {$\star$};
                \node at (24,2) {$\star$};
                \node at (24,4) {$\star$};

                \node at (26,0) {$\star$};
                \node at (26,2) {$\star$};
                \node at (26,4) {$\star$};
                \node at (26,6) {$\star$};

                \node at (28,0) {$\star$};
                \node at (28,2) {$\star$};
                \node at (28,4) {$\star$};
                \node at (28,6) {$\star$};
                \node at (28,8) {$\star$};

                \node at (30,0) {$\star$};
                \node at (30,2) {$\star$};
                \node at (30,4) {$\star$};
                \node at (30,6) {$\circ$};
                \node at (30,8) {$\circ$};
                \node at (30,10) {$\circ$};
                
                \node at (32,0) {$\star$};
                \node at (32,2) {$\circ$};
                \node at (32,4) {$\circ$};
                \node at (32,6) {$\circ$};
                \node at (32,8) {$\circ$};
                \node at (32,10) {$\circ$};
                \node at (32,12) {$\circ$};

                \node at (19,-1) {$a$};

                \node at (20,-1) {$12$};
                \node at (22,-1) {$10$};
                \node at (24,-1) {$8$};
                \node at (26,-1) {$6$};
                \node at (28,-1) {$4$};
                \node at (30,-1) {$2$};
                \node at (32,-1) {$0$};

                \node at (33,13) {$d$};
                \node at (33,12) {$6$};
                \node at (33,10) {$5$};
                \node at (33,8) {$4$};
                \node at (33,6) {$3$};
                \node at (33,4) {$2$};
                \node at (33,2) {$1$};
                \node at (33,0) {$0$};
                
                \draw[red] (32,0) -- (27,10);
                \node at (27,10.5) {$d=a$};
            \end{tikzpicture}
        \end{scriptsize}
    \end{center}
\end{remark}

\section{Future directions}\label{sec:future}

Liu et al. \cite{liu2025involution} left some interesting open problems. Our positive combinatorial formulae may yield potential ways to address two of them.

\begin{problem}\label{prob:stat}
    Find a statistic $\mathrm{stat}:\MMM_{n,a}\rightarrow\mathbb{Z}_{\ge 0}$ such that
    \[\Hilb(R(\MMM_{n,a});q) = \sum_{w\in\MMM_{n,a}}q^{\mathrm{stat}(w)}.\]
\end{problem}

Here are some ideas about Problem~\ref{prob:stat}. Let $\mathrm{IND}$ be the index set of the summation in Theorem~\ref{thm:main}. Then define $\mathrm{DIM} \coloneqq \{(P,\lambda/\mu)\,:\, \text{$\lambda/\mu\in\mathrm{IND}$ and $P\in\SYT(\lambda)$}\}$ to interpret the dimension of each graded component $R(\MMM_{n,a})_d$ of $R(\MMM_{n,a})$. That is, $\mathrm{DIM} = \bigsqcup_{d=0}^{\frac{n-a}{2}}\mathrm{DIM}_d$ where $\mathrm{DIM}_d\coloneqq\{(P,\lambda/\mu)\in\mathrm{DIM}\,:\,\wid{\lambda/\mu} = n-2d+a\}$, and Theorem~\ref{thm:main} indicates that $\lvert\mathrm{DIM}_d\rvert = \dim(R(\MMM_{n,a})_d)$. It suffices to construct a bijection $\MMM_{n,a}\overset{1:1}{\rightarrow}\mathrm{DIM}$, and then find a statistic $\mathrm{stat}$ such that $\mathrm{stat}(w) = d$ for all $w$ mapped to $\mathrm{DIM}_d$ ($d=0,1,\cdots,\frac{n-a}{2}$).

Such a bijection is easily constructible. Given $w\in\MMM_{n,a}$, we replace all the diagonal entries of the permutation matrix of $w$ with $0$, yielding a matrix $M$. Note that $M$ is a symmetric $0$-$1$ matrix with totally $(n-a)$ $1$'s and without diagonal $1$'s. Applying Proposition~\ref{prop:sym-rsk-odd}, it follows that $\mathrm{RSK}(M) = (P,P)$ where $\mathrm{sh}(P)^\prime=\mu\vdash (n-a)$ is an even partition (here we write $\nu^\prime$ for the conjugate of a partition $\nu$). Now use row insertion operation to insert all the $a$ integers in $[n]$ not appearing in $P$ into $P$ (in increasing order), yielding a new standard Young tableau $Q$ of shape $\lambda$. Consequently, we obtain a pair $(Q,\lambda/\mu)\in\mathrm{DIM}$. The tableau version of Pieri's rule (Proposition~\ref{prop:tab-pieri}) indicates that the map $w\mapsto (Q,\lambda/\mu)$ above gives a bijection $\MMM_{n,a}\overset{1:1}{\rightarrow}\mathrm{DIM}$.

However, given $w\in\MMM_{n,a}$, it is difficult to figure out the graded part $\mathrm{DIM}_d$ that the involution $w\in\MMM_{n,a}$ is mapped to. Therefore, the statistic $\mathrm{stat}$ remains mysterious.

\begin{problem}\label{prob:basis}
    Find an explicit monomial basis of $R(\MMM_{n,a})$.
\end{problem}

Here are some ideas for Problem~\ref{prob:basis}. Like what we did for Problem~\ref{prob:stat}, we use the row insertion operation. Proposition~\ref{prop:bad-ver-formula} plays a crucial role now. Fix $0\le d\le\frac{n-a}{2}$ and consider the index set in Proposition~\ref{prop:bad-ver-formula}. Given a horizontal stripe \[\lambda/\mu\in\bigsqcup_{\substack{\lambda\vdash n \\ \lambda_1\le n-2d+a}}\horipositive{d,\lambda}\]
and $P\in\SYT(\lambda)$, we want to convert the pair $(P,\lambda/\mu)$ into a monomial $m\in\CC[\xxx_{n\times n}]$ of degree $d$ so that we can construct a basis of $R(\MMM_{n,a})_d$ using all of such monomials $m$.

The inverse of the row insertion operation enables us to push the horizontal stripe $\lambda/\mu$ out of $P$, yielding a new tableau $Q$ of shape $\mu$. Let $M=\mathrm{RSK}^{-1}(Q^\prime,Q^\prime)$. Since $\mathrm{sh}(Q)=\mu\vdash 2d$ is an even partition, $M$ must be a symmetric $0$-$1$ matrix with totally $2d$ $1$'s on distinct rows and columns and avoiding the diagonal (see Propositions~\ref{prop:sym-rsk} and \ref{prop:sym-rsk-odd}). Write \[m = \prod_{\substack{i<j\\M_{i,j}=1}}x_{i,j}.\] The tableau version of Pieri's rule (Proposition~\ref{prop:tab-pieri}) indicates that the map $(P,\lambda/\mu)\mapsto m$ is injective. Therefore, we exactly obtain $\dim(R(\MMM_{n,a})_d)$ distinct monomials $m$, so we guess that such monomials $m$ form a basis of $R(\MMM_{n,a})_d$. This conjecture has been verified for $a=0$ and $n\le 8$ through computer coding.

\section{Acknowledgements}\label{sec:acknowledgements}

We thank Professor Brendon Rhoades for initiating the fruitful project about involution matrix loci and orbit harmonics, which also presents interesting open problems. In addition, we thank the referee of the journal Mathematische Zeitschrift who asked for a positive combinatorial formula without minus signs for $\grFrob(R(\MMM_{n,a});q)$.

\printbibliography

\end{document}